\documentclass[12pt,leqno]{amsart}
\usepackage{amsmath,epsfig,graphicx,color, float}
\usepackage{subcaption}
\usepackage{relsize}
\usepackage{comment}
\usepackage[colorlinks, linkcolor=blue,citecolor=blue]{hyperref}
\usepackage{pdfpages}
\usepackage{graphicx}
\usepackage{mwe}
\usepackage{algorithm}
\usepackage{algorithmicx}
\usepackage{algpseudocode}
\usepackage{natbib}

\usepackage[margin=1.3in]{geometry}

\numberwithin{table}{section}
\numberwithin{figure}{section}
\numberwithin{equation}{section}

\definecolor{darkblue}{rgb}{.2, 0.2,.8}
\definecolor{darkgreen}{rgb}{0,0.5,0.3}
\definecolor{darkred}{rgb}{.8, .1,.1}

\newcommand{\bfT}{\mat{T}}
\newcommand{\bft}{\vect{t}}
\newcommand{\bfe}{\vect{e}}

\newcommand{\bfp}{\vect{\pi}}

\newcommand{\0}{\mat{0}}

\newcommand{\E}{\mathbb{E}}
\renewcommand{\P }{{\mathbb P}}
\newcommand{\ci}{\mathrel{\text{\scalebox{1.07}{$\perp\mkern-10mu\perp$}}}}

\newcommand{\ex}{{\rm e}}

\newtheorem{lemma}{Lemma}[section]
\newtheorem{theorem}[lemma]{Theorem}
\newtheorem{proposition}[lemma]{Proposition}
\newtheorem{definition}[lemma]{Definition}
\newtheorem{corollary}[lemma]{Corollary}

\newtheorem{conjecture}[lemma]{Conjecture}
\newtheorem{remark}{Remark}[section]

\usepackage{bm}
\newcommand{\vect}[1]{\pmb{#1}}
\newcommand{\mat}[1]{\boldsymbol{\bm #1}}

\allowdisplaybreaks

\parindent 0cm
\parskip .2cm

\begin{document}
\title[Multivariate Phase-type distributions for loss modeling]{A tractable class of multivariate phase-type distributions for loss modeling}

\author[M. Bladt]{Martin Bladt}
\address{Department of Mathematical Sciences, University of Copenhagen,
Universitetsparken 5, DK-2100 Copenhagen, Denmark}
\email{martinbladt@math.ku.dk}
\begin{abstract}
Phase-type (PH) distributions are a popular tool for the analysis of univariate risks in numerous actuarial applications. Their multivariate counterparts (MPH$^\ast$), however, have not seen such a proliferation, due to a lack of explicit formulas and complicated estimation procedures.
A simple construction of multivariate phase-type distributions -- mPH -- is proposed for the parametric description of multivariate risks, leading to models of considerable probabilistic flexibility and statistical tractability. The main idea is to start different Markov processes at the same state, and allow them to evolve independently thereafter, leading to dependent absorption times. By dimension augmentation arguments, this construction can be cast under the umbrella of MPH$^\ast$ class, but enjoys explicit formulas which the general specification lacks, including common measures of dependence. Moreover, it is shown that the class is still rich enough to be dense on the set of multivariate risks supported on the positive orthant, and it is the smallest known sub-class to have this property. In particular, the latter result provides a new short proof of the denseness of the MPH$^\ast$ class. In practice, this means that the mPH class allows for the modeling of bivariate risks with any given correlation or copula. We derive an EM algorithm for its statistical estimation and illustrate it on bivariate insurance data. Extensions to more general settings are outlined.

\end{abstract}
\maketitle

\section{Introduction}

The effective description of multivariate random variables is a common challenging task for actuaries and risk managers. There exists an extensive selection of parametric models designed for this task, which very roughly can be categorized into copula and non-copula based. Copula approaches (cf.  \cite{frees1998understanding} and more generally \cite{joe2014dependence}) are easy to implement since often the margins are fitted separately from the dependence structure. However, their dependence structure {can sometimes be stringent}, and in a one-to-one correspondence with celebrated summary statistics for dependence such as rank correlations {(it is worth mentioning that vine copulas have to a large extent overcome this classic drawback, see \cite{joe1997multivariate})}. Moreover, formulas for their density and other standard functionals are not explicit. See also \cite{mikosch2006copulas} for a conceptual critique on the copula methodology. On the other hand, non-copula approaches tend to possess closed-form densities. For instance, \cite{lee2012modeling} (see also  \cite{willmot2015some}) propose infinite mixtures of Erlang vectors for the effective modeling of multivariate risks. A potential drawback of this approach is that, although guaranteed to approximate any possible risk arbitrarily closely, the actual number of Erlang mixture components does not have a natural or physical interpretation beyond a mixing mechanism. { Additionally, Erlang mixtures in the literature assume a common scale parameter, which may cause poor fitting performance when marginals are of different scales.}

This article proposes a parametric model based on the absorption times of Markov processes.  The advantage of this approach, from an interpretation point of view, is that we may consider the size of each realization as the traversing of a multi-state process coming to an end. More specifically, for each distinct risk, we associate a Markov process, all defined on the same state space (without loss of generality). At inception, all risks start at the same state and are thereafter allowed to traverse the state-space independently until their individual absorptions. This is a bold claim since we are implying that dependence is entirely decided before the actual size of the claims is observed. The dependence-generating process is particularly well-suited for interpreting insurance loss severities. For instance, we may think of the initial distribution of the Markov processes as a common event such as an accident or a natural disaster. After the common event, the evolution of the loss severities is then no longer subject to additional interactions. Simple mixing (as in \cite{lee2012modeling}) can be seen as a degenerate case: when the multi-state processes do not jump from any state to any other state. Modeling joint lifetimes as the absorption of the dependent multi-state process is also conceptually aligned with the analysis and development of products within life insurance. { In terms of statistical performance, the scaling issue from mixtures of Erlangs is also alleviated, since there are no constraints on the parameters in the proposed model. Here, the number of components of mixtures is conceptually replaced by matrix dimensions.}

Despite the simple way of generating dependence, we show that the resulting model is versatile and tractable. We show that the model can be embedded in the multivariate phase-type MPH$^\ast$ class of \cite{kulkarni1989new} (after dimension augmentation), and thus enjoys the property of all projections being univariate phase-type (PH) distributed (introduced by \cite{neuts75}; see also \cite{Bladt2017}). We write mPH to distinguish our multivariate model of phase-type distributions from the general class MPH$^\ast$.
However, and in contrast to the latter class, we derive closed-form formulas for its density, cumulative distribution function, moments, and the most common measures of dependence. We provide proof of the denseness of the mPH class on the positive orthant, along the lines of \cite{johnson1988denseness}, and derive a fully explicit EM algorithm for their Maximum-Likelihood (ML) estimation, generalizing the formulas of \cite{asmussen1996fitting} to the multi-dimensional case.

Multivariate phase-type distributions were first treated in \cite{assaf1984multivariate}, which, together with the construction of \cite{kulkarni1989new}, are the most commonly used classes. The main advantage of our proposed model with respect to the previous contributions is the ability to estimate the model (and its right-censored and fractional extensions) to data in a very straightforward manner. 
The recent contribution \cite{furman2021} considered scaling of PH distributions which resulted in multivariate models with background risk. The statistical and tail behaviour of related models was considered in \cite{albrecher2021continuous}. The mPH class differs from the latter conceptually in that the dependence does not occur as a common shock to all the loss severities, but rather as a shared component at the inception of the claim-size processes.

The rest of the article is structured as follows. Section \ref{sec:mPH} introduces the mPH class and derives its basic properties and measures of dependence. Subsequently, Section \ref{sec:dens} is devoted to showing a deeper property of mPH distributions: their denseness on the positive orthant. The statistical estimation is developed in terms of an EM algorithm in Section \ref{sec:est}, and an illustration is provided in Section \ref{sec:illustration}. Relevant inhomogeneous and fractional extensions are outlined in Section \ref{extensions}, and finally Section \ref{sec:conclusion} concludes.


\section{Multivariate Phase-type distributions}\label{sec:mPH}

\subsection{Notation}
In what follows, we use the following conventions. The vector inequality $\{X\le x\}$ is understood as $\bigcap_{i=1}^d \{X_i\le x_i\},$ and similarly for related notation. For a scalar $\lambda$ and a vector $x$, we understand by $\lambda x$ as the vector $(\lambda x_i)_{i=1,\dots,d}$, and similarly for division. The operators $\otimes$ and $\oplus$ denote the Kronecker multiplication and addition between matrices, respectively.

For any square matrix $\mat{A}$ of dimension $p\times p$, we define
\begin{align*}
	u( \mat{A})=\dfrac{1}{2 \pi i} \oint_{\Gamma}u(w) (w \mat{I} -\mat{A} )^{-1}dw \,,\quad \mat{A}\in \mathbb{R}^{p\times p} \,,
\end{align*}
with $\Gamma$ a simple path enclosing the eigenvalues of $\mat{A}$, and $\mat{I}$ the identity matrix of the same dimension.

\subsection{Definition}
Let $ ( J_t^{(k)} )_{t \geq 0}$, $k=1,\dots,d$, be homogeneous Mar\-kov pure-jump processes on the common state space $\{1, \dots, p, p+1\}$, with states $1,\dots,p$ transient and $p+1$ absorbing, so defining
\begin{align*}
p^{(k)}_{jl}(s,t)=\P(J_t=l|J_s=j)\,,\quad 0\le j,l\le p+1,\:\:0\le k\le d \,,
\end{align*}
we may write
$$\mat{P}_k(s,t)=\exp(\mat{\Lambda}_k(t-s))=
\left( \begin{array}{cc}
		\exp(\bfT_k(t-s)) &  \bfe-\exp(\bfT_k(t-s)) \bfe \\
		\0 & 1
	\end{array} \right)\in\mathbb{R}^{(p+1)\times(p+1)},$$
for $s<t,$ $0\le k\le d$, where $\mat{\Lambda}_k(t)$ are intensity matrices.

For future reference, we write $\bfe_k$ for the $k$-th canonical basis vector in $\mathbb{R}^p$, $\bfe=\sum_{i=1}^p \bfe_i$, and
$$\bfT_k=(t_{ij}^{(k)})_{i,j=1,\dots,p}\,,\quad \bft_k=-\bfT_k\bfe=(t^{(k)}_1,\dots,t^{(k)}_p)^{\mathsf{T}}\,,\quad \lambda^{(k)}_i=-t_{ii}>0\,,\:\:\: i=1,\dots,p\,.$$

The process $\{J_t^{(k)}\}_{t\ge0}$ is assumed to generate the $k$-th risk, $0\le k\le d$, by traversing through different states before reaching absorption. Their dependence structure is completely specified as follows:
\begin{align}\label{dependence_def}
J_0^{(k)}=J_0^{(l)},\quad  ( J_t^{(k)} )_{t \geq 0} {\ci}_{J_0^{(1)}} ( J_t^{(l)} )_{t \geq 0,\:l \neq k},\quad  k,l\in\{1,\dots,d\},
\end{align}
which means that they all start at the same state, but otherwise evolve independently thereafter (note that the $t=0$ case is trivially true). For simplicity, we denote $J_0=J_0^{(1)}$, and $\P(J_0=j)=\pi_j$, $j=1,\dots,p$, and $\bfp=(\pi_1,\dots,\pi_p)$.

Thus, the following random variables
\begin{align}\label{components_def}
	X_i = \inf \{ t >  0 : J^{(i)}_t = p+1 \}\,, \quad i=1,\dots,d,
\end{align}
are all univariate phase-type distributed. Their joint behaviour will be our primary focus in the sequel.

\begin{definition}[mPH class]
We say that a random vector $X\in\mathbb{R}_+^d$ has a multivariate phase-type distribution if each component variable $X_i,\:\:i=1,\dots,d$ is given as in \eqref{components_def}, that is, they are the absorption times of distinct Markov jump processes having the structure \eqref{dependence_def}, which amounts to the following defining properties:
\begin{enumerate}
\item An initial state is drawn, and all $d$ processes start at such a state.
\item Thereafter, each of the $d$ Markov processes evolves independently, until their respective absorptions.
\end{enumerate}
\noindent Moreover, we use the notation $$X\sim \mbox{mPH}(\bfp,\mathcal{T}), \quad \mbox{where}\quad \mathcal{T}=\{\bfT_1,\dots,\bfT_d\}.$$ Without loss of generality, we may assume that $\bfp$ has dimension $p$ and the $\bfT_k$, $k=1,\dots,d$, have dimension $p\times p$.
\end{definition}
\begin{remark}\rm
{ Throughout the remainder of the article, we will use matrices ${\bfT}_k$ of no particular substructure, so that the total degrees of freedom (number of free parameters) is given by $p-1+dp^2$.}
\end{remark}

We begin by showing that this definition falls into a very general class of multivariate distributions with phase-type marginals, the MPH$^\ast$ class of \cite{kulkarni1989new}. 
\begin{proposition}[mPH $\subset$ MPH$^\ast$]
The mPH class is contained in the MPH$^\ast$ class.
\end{proposition}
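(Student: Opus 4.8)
The plan is to exhibit an explicit MPH$^\ast$ representation for $X\sim\mathrm{mPH}(\bfp,\mathcal{T})$ by running all $d$ processes on a single enlarged state space and reading off the reward structure of \cite{kulkarni1989new}. Recall that a vector belongs to MPH$^\ast$ if it can be written as $X_i=\int_0^\infty r_i(J_s)\,ds$ for a single finite Markov jump process $(J_s)_{s\ge0}$ with a single absorbing state, where $\mat{R}=(r_i(j))$ is a nonnegative reward matrix. So the task is purely to encode the ``common start, then independent evolution'' mechanism \eqref{dependence_def} into one Markov process together with suitable rewards.

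First I would build the driving process. Take the state space to be (essentially) the product $\{1,\dots,p\}^d$ together with lower-dimensional faces corresponding to those coordinates that have already been absorbed, plus one global absorbing state; concretely, a state records, for each $k$, either the current phase of $J^{(k)}$ or the symbol ``absorbed''. The initial distribution concentrates on the ``diagonal'' states $(j,j,\dots,j)$ with probability $\pi_j$ — this is exactly the constraint $J_0^{(k)}=J_0^{(l)}$. The generator is the Kronecker sum of the $\bfT_k$ (suitably extended to handle absorption of individual coordinates and eventual transition to the global absorbing state once all $d$ coordinates are absorbed); conditional independence given $J_0$ is precisely the statement that the joint generator, \emph{after} the initial state is fixed, factorizes as this sum, so the marginal law of $(J^{(k)}_t)_{t\ge0}$ extracted from the $k$-th coordinate is the original one and the coordinates move independently. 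Then I would define the reward to coordinate $i$ to be $1$ in any state whose $i$-th component is a genuine phase $\{1,\dots,p\}$ and $0$ once that component has been absorbed (and $0$ in the global absorbing state). With this choice $\int_0^\infty r_i(J_s)\,ds$ is exactly the total time the $i$-th component spends before its own absorption, i.e.\ $X_i$ as in \eqref{components_def}.

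The remaining checks are routine: the enlarged process is a finite-state Markov jump process, it is absorbed in finite time almost surely (each coordinate is absorbed a.s.\ since each $\bfT_k$ is a sub-intensity matrix of a PH distribution), the rewards are nonnegative, and the construction reproduces the defining properties (1)--(2). Hence $X$ fits the definition of \cite{kulkarni1989new} and mPH $\subset$ MPH$^\ast$.

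I expect the only real obstacle to be bookkeeping rather than mathematics: one must be careful in laying out the augmented state space so that coordinates can be absorbed asynchronously (a coordinate hitting $p+1$ should not force the others to stop), and in writing the generator so that the off-diagonal blocks feeding absorbed-coordinate states are consistent with $\bft_k=-\bfT_k\bfe$. Making this precise is what the ``dimension augmentation'' phrase in the abstract refers to; once the state space and generator are written down correctly, the reward identification and the verification of the MPH$^\ast$ axioms are immediate.
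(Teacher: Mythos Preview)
Your proposal is correct and constitutes a valid proof that mPH $\subset$ MPH$^\ast$, but it takes a genuinely different route from the paper. You build a \emph{parallel} representation: the $d$ chains run simultaneously on (essentially) the product state space $\{1,\dots,p+1\}^d$ with the Kronecker-sum generator, the initial distribution is supported on the diagonal $(j,\dots,j)$ with weight $\pi_j$, and the reward to coordinate $i$ is the indicator that coordinate $i$ is still alive. The paper instead uses a \emph{sequential} representation: for each fixed starting state $k$ it runs the chains one after another --- first $\bfT_1$ started at $k$, upon absorption restart at $k$ and run $\bfT_2$, and so on through $\bfT_d$ --- with reward to coordinate $i$ accruing only during the $i$-th segment; the $p$ possible starting states are then handled by a further block-diagonal mixture, giving total dimension $p^2 d$. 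Both encodings are legitimate MPH$^\ast$ constructions. Your parallel version makes the conditional-independence structure \eqref{dependence_def} completely transparent (after fixing $J_0$ it is literally the Kronecker sum), at the cost of a state space of order $(p+1)^d$; the paper's serial version is less immediately intuitive but grows only linearly in $d$, which is precisely the ``much larger dimension'' contrast the paper draws just after the proof.
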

\begin{proof}
{\color{black}
Define the parameters
\begin{align*}
\tilde\bfp_k=(\pi_k\bfe_k^\mathsf{T},\vect{0},\dots,\vect{0})\in\mathbb{R}^{p d}, \quad k=1,\dots,p,
\end{align*}

\begin{align*}
\tilde{\bfT}_k=
\left(\begin{matrix}{}
  \bfT_1 &\bft \bfe_k^\mathsf{T} &  &\\
  &  \bfT_2 &\bft \bfe_k^\mathsf{T}  &\\
  & & \ddots & \\
  & & & & \bfT_d
\end{matrix}\right)\in\mathbb{R}^{(pd)\times(pd)},\quad k=1,\dots,p,
\end{align*}
\begin{align*}
{\mat{R}}=
\left(\begin{matrix}{}
  \bfe&\vect{0} & \cdots &\\
  \vect{0} &  \bfe &\vect{0}  &\\
  \vdots& & \ddots & \\
  & & & & \bfe
\end{matrix}\right)\in\mathbb{R}^{(pd)\times d}.
\end{align*}
Then with
\begin{align*}
&\tilde\bfp =(\tilde\bfp_1,\dots,\tilde\bfp_p)\in\mathbb{R}^{p^2d}\\
& \tilde{\bfT}=
\left(\begin{matrix}{}
  \tilde{\bfT}_1 & &  &\\
  &  \tilde{\bfT}_2 &  &\\
  & & \ddots & \\
  & & & & \tilde{\bfT}_d
\end{matrix}\right)\in\mathbb{R}^{(p^2d)\times(p^2d)}
,\quad \tilde{\mat{R}}=
\left(\begin{matrix}{}
  \mat{R}\\
  \mat{R} \\
  \vdots \\
 \mat{R}
\end{matrix}\right)\in \mathbb{R}^{(p^2d)\times d},
\end{align*}
it is easy to see that the reward-collecting mechanism in this higher-dimensional space leads to precisely the mPH class.}
\end{proof}
Notice that the dimension of the latter representation is much larger than the individual marginal dimension of the mPH class. This implies that fitting methods for MPH$^\ast$ are prohibitively slow when applied to the mPH subclass, which motivates us to derive a novel estimation procedure in a later section, specifically designed for our case. However, belonging to the MPH$^\ast$ class does imply following desirable property:
\begin{corollary}[Projections]
Let $X$ be a multivariate phase-type random vector. Then any inner product $$\langle X,u \rangle=\sum_{i=1}^d X_i u_i,\quad u>0.$$ is univariate phase-type distributed. 
\end{corollary}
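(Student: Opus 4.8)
The plan is to leverage the preceding proposition, which already places the mPH class inside the MPH$^\ast$ class of \cite{kulkarni1989new}, and then appeal to the known closure property of MPH$^\ast$ under nonnegative linear combinations. Concretely, writing $X\sim\mbox{mPH}(\bfp,\mathcal{T})$ with the dimension-augmented representation $(\tilde\bfp,\tilde{\bfT},\tilde{\mat R})$ constructed in the proof of the proposition, the reward vector associated with the linear functional $\langle X,u\rangle=\sum_{i=1}^d u_i X_i$ is simply $\tilde{\mat R}u\in\mathbb{R}^{p^2 d}$, a nonnegative vector since $u>0$ and the entries of $\tilde{\mat R}$ are in $\{0,1\}$. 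The defining feature of the MPH$^\ast$ construction is that for any nonnegative reward vector $\bfr$, the total reward $\int_0^\infty \bfr_{J_t}\,dt$ accumulated along the underlying Markov jump process until absorption is univariate phase-type; hence $\langle X,u\rangle$ is PH-distributed.

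First I would recall precisely the MPH$^\ast$ mechanism: one has an ambient Markov jump process on a finite transient state space with sub-intensity matrix $\tilde{\bfT}$, initial distribution $\tilde\bfp$, and a reward matrix $\tilde{\mat R}$ with nonnegative entries; the $j$-th marginal is $X_j=\int_0^{\zeta}\tilde r_{J_t,j}\,dt$ where $\zeta$ is the absorption time. Then $\langle X,u\rangle=\int_0^\zeta (\tilde{\mat R}u)_{J_t}\,dt$, i.e.\ the reward collected under the single reward vector $\tilde{\mat R}u\ge 0$. Second, I would invoke the standard fact (already implicit in \cite{kulkarni1989new}; see also the PH literature such as \cite{Bladt2017}) that a reward-weighted absorption time of a finite-state Markov jump process with a strictly positive reward vector is again phase-type, via a deterministic time-change $t\mapsto \int_0^t r_{J_s}\,ds$ that rescales the holding rates by $r_i$. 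Third, for rewards that are merely nonnegative (some $r_i=0$), one removes the zero-reward states by the usual censoring/elimination argument, producing a smaller sub-intensity matrix that governs a genuine phase-type law; since here $\tilde{\mat R}u$ has all entries in $\{u_1,\dots,u_d\}$ and $u>0$, every entry is strictly positive, so this last reduction is not even needed and the time-change argument applies directly.

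The step requiring the most care is making the time-change argument rigorous: one must check that when the reward vector is strictly positive, the process $Y_s$ defined by inverting $t\mapsto\int_0^t r_{J_s}\,ds$ is again a Markov jump process on the same state space, with sub-intensity matrix $\mat{\Delta}(r)^{-1}\tilde{\bfT}$ where $\mat{\Delta}(r)=\mathrm{diag}(r)$, and initial distribution $\tilde\bfp$; then $\langle X,u\rangle\sim\mbox{PH}(\tilde\bfp,\mat{\Delta}(\tilde{\mat R}u)^{-1}\tilde{\bfT})$. This is classical but worth stating, since it is the only genuinely probabilistic content. Everything else — nonnegativity of $\tilde{\mat R}u$, the reformulation $\langle X,u\rangle=\int_0^\zeta(\tilde{\mat R}u)_{J_t}\,dt$ — is bookkeeping. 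I would therefore structure the proof as: (i) cite the proposition to get the MPH$^\ast$ representation, (ii) observe that projections of MPH$^\ast$ vectors are reward functionals of a single Markov jump process, and (iii) conclude via the time-change that such reward functionals with positive weights are phase-type, optionally adding one line noting this recovers the corresponding result of \cite{kulkarni1989new}.
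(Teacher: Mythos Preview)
Your proposal is correct and follows exactly the paper's approach: the corollary is stated in the paper without proof, merely as an immediate consequence of the preceding proposition placing mPH inside MPH$^\ast$, together with the known closure of MPH$^\ast$ under nonnegative linear combinations from \cite{kulkarni1989new}. You have simply spelled out the reward/time-change argument that the paper leaves implicit in its citation.
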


\subsection{Basic properties}
We proceed to derive closed-form formulas for several functionals of interest for the mPH class. Not only are these properties desirable from a multivariate distribution perspective, but they will be crucial in later sections for deriving closed-form formulas for measures of dependence and the EM algorithm.
We define the cumulative distribution function as $$F_X(x)=\P(X_1\le x_1,X_2\le x_2,\dots,X_d\le x_d),\quad x\in\mathbb{R}_+^d,$$ and the survival function as $$S_X(x)=\P(X_1>x_1,X_2> x_2,\dots,X_d> x_d), \quad x\in\mathbb{R}_+^d.$$ Clearly $F_X(x)\neq 1-S_X(x)$ for $d>1$, but they can be related by inclusion-exclusion formulas.

\begin{proposition}[Cumulative distribution function]
Let $X\sim \mbox{mPH}(\bfp,\mathcal{T})$ be a multivariate phase-type random vector. Then
\begin{align*}
F_X(x)=\sum_{j=1}^p\pi_j \prod_{i=1}^d(1-\bfe_j^\mathsf{T}\exp(\bfT_i x_i)\bfe),\quad x\in\mathbb{R}_+^d.
\end{align*}
For the survival function, we may express it in terms of the survival function of a univariate phase-type distribution of appropriate size, as follows.
\begin{align*}
S_X(x)=\P(Y(x)>1),\quad x\in\mathbb{R}_+^d,
\end{align*}
for $Y(x)\sim \mbox{PH}(\tilde \bfp, \bfT(x))$, with
\begin{align}\label{big_pi}
\tilde\bfp=(\pi_1 (\bfe_1^\mathsf{T}\otimes\cdots\otimes \bfe_1^\mathsf{T}),\dots,\pi_p (\bfe_p^\mathsf{T}\otimes\cdots\otimes \bfe_p^\mathsf{T})),
\end{align}
and
\begin{align}\label{big_t}
	\bfT(x)=\left(\begin{matrix}{}
  \bfT_1 x_1 \oplus\cdots\oplus \bfT_d x_d& &  &\\
  & \bfT_1 x_1 \oplus\cdots\oplus \bfT_d x_d &  &\\
  & & \ddots & \\
  & & & & \bfT_1 x_1 \oplus\cdots\oplus \bfT_d x_d
\end{matrix}\right),
\end{align}
where the diagonal block matrix is repeated $p$ times.
\end{proposition}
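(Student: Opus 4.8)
The CDF formula should follow by direct computation from the construction. Conditioning on the common initial state $J_0 = j$ (which happens with probability $\pi_j$), the processes $(J^{(1)}_t),\dots,(J^{(d)}_t)$ evolve independently, so the absorption times $X_1,\dots,X_d$ are conditionally independent. For a single phase-type absorption time started deterministically in state $j$, we have $\P(X_i \le x_i \mid J_0 = j) = 1 - \bfe_j^\mathsf{T}\exp(\bfT_i x_i)\bfe$, since $\bfe_j^\mathsf{T}\exp(\bfT_i x_i)\bfe$ is exactly the probability that process $i$ has not yet been absorbed by time $x_i$. Multiplying over $i$ by conditional independence and then summing over $j$ against $\pi_j$ gives the stated formula; I would present this in one or two lines.

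For the survival function, the plan is to recognize the product-form analogue: conditioning on $J_0 = j$, $S_X(x)$ contributes $\pi_j \prod_{i=1}^d \bfe_j^\mathsf{T}\exp(\bfT_i x_i)\bfe = \pi_j \prod_{i=1}^d \P(X_i > x_i \mid J_0 = j)$. The key algebraic step is to rewrite the product of "survival bilinear forms" as a single bilinear form in a Kronecker structure. Using the identity $\prod_{i=1}^d \bfe_j^\mathsf{T}\exp(\bfT_i x_i)\bfe = (\bfe_j^\mathsf{T}\otimes\cdots\otimes\bfe_j^\mathsf{T})\big(\exp(\bfT_1 x_1)\otimes\cdots\otimes\exp(\bfT_d x_d)\big)(\bfe\otimes\cdots\otimes\bfe)$, together with the mixed-product property $\exp(\bfT_1 x_1)\otimes\cdots\otimes\exp(\bfT_d x_d) = \exp(\bfT_1 x_1 \oplus\cdots\oplus\bfT_d x_d)$, each conditional term becomes the survival function at time $1$ of a $\mathrm{PH}$ distribution with sub-intensity $\bfT_1 x_1\oplus\cdots\oplus\bfT_d x_d$ and initial vector $\bfe_j^\mathsf{T}\otimes\cdots\otimes\bfe_j^\mathsf{T}$ (degenerate at the "all-$j$" product state). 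Assembling the mixture over $j=1,\dots,p$ into a single $\mathrm{PH}$ distribution is then done by stacking: the block-diagonal generator $\bfT(x)$ in \eqref{big_t} with $p$ identical diagonal blocks, and the initial vector $\tilde\bfp$ in \eqref{big_pi} placing mass $\pi_j$ on the $j$-th block's degenerate state. Hence $S_X(x) = \P(Y(x) > 1)$ with $Y(x)\sim\mathrm{PH}(\tilde\bfp,\bfT(x))$.

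The only genuinely delicate point is bookkeeping with the Kronecker identities and making sure the block-diagonal assembly faithfully represents the finite mixture — i.e.\ that a $\mathrm{PH}$ distribution whose initial vector is supported on disjoint blocks of a block-diagonal generator is exactly the mixture of the per-block $\mathrm{PH}$ distributions. This is standard but worth stating cleanly. I would also remark that the matrix $\bfT(x)$ is a genuine sub-intensity matrix (its row sums are $\le 0$, strictly negative for at least one state in each block reachable from the support of $\tilde\bfp$) so the $\mathrm{PH}$ interpretation is legitimate; this is inherited from each $\bfT_i$ being a sub-intensity matrix via the standard fact that Kronecker sums of sub-intensity matrices are sub-intensity matrices. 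No step requires more than a short paragraph of verification.
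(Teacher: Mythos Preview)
Your proposal is correct and follows essentially the same route as the paper: condition on $J_0=j$, use conditional independence to factorize, apply the univariate phase-type survival/CDF formula, and for the survival function convert the product via the Kronecker identity $\exp(\bfT_1 x_1)\otimes\cdots\otimes\exp(\bfT_d x_d)=\exp(\bfT_1 x_1\oplus\cdots\oplus\bfT_d x_d)$ before assembling the mixture over $j$ into the block-diagonal $\mbox{PH}(\tilde\bfp,\bfT(x))$ representation. Your additional remarks on verifying that $\bfT(x)$ is a genuine sub-intensity matrix and that the block-diagonal structure realizes the finite mixture are sound and slightly more explicit than the paper, but the argument is the same.
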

\begin{proof}
We have that
\begin{align*}
&\P(X_1\le x_1,X_2\le x_2,\dots,X_d\le x_d)\\
&=\sum_{j=1}^p \P(X_1\le x_1,X_2\le x_2,\dots,X_d\le x_d| J_0=j)\P(J_0=j)\\
&=\sum_{j=1}^p\pi_j \prod_{i=1}^d\P(X_i\le x_i| J_0=j)=\sum_{j=1}^p\pi_j \prod_{i=1}^d(1-\bfe_j^\mathsf{T}\exp(\bfT_i x_i)\bfe),
\end{align*}
and similarly for the survival function
\begin{align*}
&\P(X_1> x_1,X_2> x_2,\dots,X_d>x_d)\\
&=\sum_{j=1}^p \P(X_1> x_1,X_2> x_2,\dots,X_d> x_d| J_0=j)\P(J_0=j)\\
&=\sum_{j=1}^p\pi_j \prod_{i=1}^d\P(X_i> x_i| J_0=j)=\sum_{j=1}^p\pi_j \prod_{i=1}^d\bfe_j^\mathsf{T}\exp(\bfT_i x_i)\bfe\\
&=\sum_{j=1}^p\pi_j (\bfe_j^\mathsf{T}\otimes\cdots\otimes \bfe_j^\mathsf{T})\exp(\bfT_1 x_1 \oplus\cdots\oplus \bfT_d x_d)\bfe\\
&=\sum_{j=1}^p\pi_j (\bfe_j^\mathsf{T}\otimes\cdots\otimes \bfe_j^\mathsf{T})\exp(\bfT_1 x_1 \oplus\cdots\oplus \bfT_d x_d)\bfe,
\end{align*}
which implies the second claim.
\end{proof}
\begin{corollary}[Density]
Let $X\sim \mbox{mPH}(\bfp,\mathcal{T})$ be a multivariate phase-type random vector. Then
\begin{align*}
f_X(x)=\sum_{j=1}^p\pi_j \prod_{i=1}^d\bfe_j^\mathsf{T}\exp(\bfT_i x_i)\bft_i =f_{Y(x)}(1),\quad x\in\mathbb{R}_+^d,
\end{align*}
where $Y(x)$ is univariate phase-type with parameters given in \eqref{big_pi} and \eqref{big_t}.
\end{corollary}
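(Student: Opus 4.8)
The plan is to read off the density as the mixed partial derivative of the cumulative distribution function obtained in the preceding proposition. Writing that formula as $F_X(x)=\sum_{j=1}^p\pi_j\prod_{i=1}^d g_{ij}(x_i)$ with $g_{ij}(x_i)=1-\bfe_j^\mathsf{T}\exp(\bfT_i x_i)\bfe$, the key structural observation is that, for each fixed $j$, the product $\prod_{i=1}^d g_{ij}(x_i)$ is separable, the $i$-th factor depending on $x_i$ only; hence the $d$-fold derivative $\partial^d/\partial x_1\cdots\partial x_d$ differentiates each factor exactly once, giving
\[
f_X(x)=\frac{\partial^d F_X(x)}{\partial x_1\cdots\partial x_d}=\sum_{j=1}^p\pi_j\prod_{i=1}^d g_{ij}'(x_i).
\]
The interchange of the finite sum over $j$ with the derivative is immediate, and it remains to compute $g_{ij}'(x_i)=-\bfe_j^\mathsf{T}\bfT_i\exp(\bfT_i x_i)\bfe=\bfe_j^\mathsf{T}\exp(\bfT_i x_i)(-\bfT_i\bfe)=\bfe_j^\mathsf{T}\exp(\bfT_i x_i)\bft_i$, using that $\bfT_i$ commutes with $\exp(\bfT_i x_i)$ and the definition $\bft_i=-\bfT_i\bfe$. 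This establishes the first equality.

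A second, more probabilistic derivation of the same formula can be recorded in parallel: conditioning on the common starting state $J_0=j$ and invoking the conditional independence \eqref{dependence_def}, the coordinates $X_1,\dots,X_d$ are conditionally independent given $\{J_0=j\}$, and $X_i\mid\{J_0=j\}$ is univariate phase-type with initial row vector $\bfe_j^\mathsf{T}$ and sub-intensity $\bfT_i$, hence has conditional density $x_i\mapsto\bfe_j^\mathsf{T}\exp(\bfT_i x_i)\bft_i$; multiplying these and averaging with weights $\pi_j$ reproduces $f_X(x)=\sum_{j=1}^p\pi_j\prod_{i=1}^d\bfe_j^\mathsf{T}\exp(\bfT_i x_i)\bft_i$.

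Finally, the identification $f_X(x)=f_{Y(x)}(1)$ proceeds exactly as in the proof of the cumulative distribution function proposition, by Kronecker algebra: the mixed-product property together with $\exp(\bfT_1 x_1\oplus\cdots\oplus\bfT_d x_d)=\exp(\bfT_1 x_1)\otimes\cdots\otimes\exp(\bfT_d x_d)$ rewrites each summand as $(\bfe_j^\mathsf{T}\otimes\cdots\otimes\bfe_j^\mathsf{T})\exp(\bfT_1 x_1\oplus\cdots\oplus\bfT_d x_d)(\bft_1\otimes\cdots\otimes\bft_d)$, and reading off the block-diagonal structure of $\bfT(x)$ in \eqref{big_t} against the block form of $\tilde\bfp$ in \eqref{big_pi} then displays $f_X(x)$ as the claimed univariate phase-type density of $Y(x)$ evaluated at $t=1$. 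I do not anticipate a genuine obstacle; the only step requiring care is this last one, namely the bookkeeping of the Kronecker factors and the compatibility of dimensions of the augmented representation, which is routine and entirely analogous to the survival-function case.
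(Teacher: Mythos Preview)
Your proposal is correct and follows essentially the same approach as the paper, whose proof consists of the single line ``Follows by repeated partial differentiation.'' Your write-up simply spells out the details of that differentiation and adds an alternative conditioning argument and the Kronecker bookkeeping for the second equality, none of which the paper records explicitly.
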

\begin{proof}
Follows by repeated partial differentiation.
\end{proof}
The quantiles, even for $d=1$, are in general not explicitly available, and thus numerical methods are required to calculate, for instance, the value-at-risk, tail-value-at-risk, conditional tail expectation, or expected shortfall.

\subsection{Moments and measures of dependence}
Moments are of interest for any multivariate distribution since the provide a glimpse into the dependence between different elements of the random vector.
\begin{theorem}[Laplace transform and moments]
Let $X\sim \mbox{mPH}(\bfp,\mathcal{T})$ be a multivariate phase-type random vector. Then its joint Laplace transform is given by
\begin{align*}
\E(\exp(-uX))=\sum_{j=1}^p\pi_j\prod_{i=1}^d\bfe_j^{\mathsf{T}}(u_i\mat{I}-\bfT_i)^{-1}\bft_i,\quad u\in\mathbb{R}_+^d.
\end{align*}
For $\theta_1,\dots,\theta_d>-1$, we have that
\begin{align*}
\E(X_1^{\theta_1}\cdots X_d^{\theta_d})=\sum_{j=1}^p \pi_j\prod_{i=1}^d \Gamma(\theta_i+1)\bfe_j^{\mathsf{T}}(-\bfT_i)^{-\theta_i}\bfe.
\end{align*}
\end{theorem}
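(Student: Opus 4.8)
The plan is to condition on the common initial state $J_0=j$ and exploit the conditional independence \eqref{dependence_def}. For measurable $h_1,\dots,h_d$,
\[
\E\Big(\prod_{i=1}^d h_i(X_i)\Big)=\sum_{j=1}^p\pi_j\,\E\Big(\prod_{i=1}^d h_i(X_i)\,\Big|\,J_0=j\Big)=\sum_{j=1}^p\pi_j\prod_{i=1}^d\E\big(h_i(X_i)\mid J_0=j\big),
\]
where the last equality uses that, given $J_0=j$, the processes $(J^{(i)}_t)_{t\ge0}$ are mutually independent and $X_i$ is a deterministic functional of the $i$-th one. Thus everything reduces to the univariate quantities $\E(h_i(X_i)\mid J_0=j)$, which are transforms/moments of a PH distribution with the degenerate initial vector $\bfe_j^\mathsf{T}$ and sub-intensity matrix $\bfT_i$; as already implicit in the proof of the distribution-function proposition, the corresponding conditional density on $(0,\infty)$ is $t\mapsto\bfe_j^\mathsf{T}\exp(\bfT_i t)\bft_i$.

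For the Laplace transform I would take $h_i(y)=e^{-u_iy}$ and compute
\[
\E\big(e^{-u_iX_i}\mid J_0=j\big)=\int_0^\infty e^{-u_it}\,\bfe_j^\mathsf{T}\exp(\bfT_i t)\bft_i\,dt=\bfe_j^\mathsf{T}\Big(\int_0^\infty e^{-u_it}\exp(\bfT_i t)\,dt\Big)\bft_i=\bfe_j^\mathsf{T}(u_i\mat{I}-\bfT_i)^{-1}\bft_i,
\]
where the matrix integral converges and equals $(u_i\mat{I}-\bfT_i)^{-1}$ because all eigenvalues of $\bfT_i$ have strictly negative real part, so $u_i\mat{I}-\bfT_i$ is invertible for every $u_i\ge0$; this follows either from the contour-integral calculus $u(\mat A)=\frac{1}{2\pi i}\oint_{\Gamma}u(w)(w\mat I-\mat A)^{-1}dw$ of the notation section applied to the resolvent, or from a Jordan decomposition reducing to the scalar fact $\int_0^\infty e^{-u_it}e^{\lambda t}dt=(u_i-\lambda)^{-1}$. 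Substituting into the factorisation above yields the stated formula for $\E(\exp(-uX))$.

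For the cross-moments I would take $h_i(y)=y^{\theta_i}$ with $\theta_i>-1$ and use
\[
\E\big(X_i^{\theta_i}\mid J_0=j\big)=\int_0^\infty t^{\theta_i}\,\bfe_j^\mathsf{T}\exp(\bfT_i t)\bft_i\,dt=\bfe_j^\mathsf{T}\Big(\int_0^\infty t^{\theta_i}\exp(\bfT_i t)\,dt\Big)\bft_i.
\]
The crux is the matrix-valued identity $\int_0^\infty t^{\theta}\exp(\bfT t)\,dt=\Gamma(\theta+1)(-\bfT)^{-(\theta+1)}$ for $\theta>-1$, the analogue of $\int_0^\infty t^\theta e^{-\lambda t}dt=\Gamma(\theta+1)\lambda^{-(\theta+1)}$ ($\mathrm{Re}\,\lambda>0$); it can be obtained from the same functional calculus applied with $\mat A=-\bfT$ and $u(w)=w^{-(\theta+1)}$, interchanging the $t$-integral with the contour integral by Fubini, or again via Jordan form. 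Since $\bft_i=-\bfT_i\bfe$, the factor telescopes,
\[
\Big(\int_0^\infty t^{\theta_i}\exp(\bfT_i t)\,dt\Big)\bft_i=\Gamma(\theta_i+1)(-\bfT_i)^{-(\theta_i+1)}(-\bfT_i)\bfe=\Gamma(\theta_i+1)(-\bfT_i)^{-\theta_i}\bfe,
\]
and plugging this into the factorisation gives $\E(X_1^{\theta_1}\cdots X_d^{\theta_d})=\sum_{j=1}^p\pi_j\prod_{i=1}^d\Gamma(\theta_i+1)\bfe_j^\mathsf{T}(-\bfT_i)^{-\theta_i}\bfe$.

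The routine parts — interchanging the finite sum over $j$ with the finite products, and interchanging expectation with the $t$-integrals (Tonelli for the nonnegative moment integrand, absolute convergence for the Laplace transform) — are immediate. The only genuinely technical point is the fractional matrix-power identity: one checks integrability of $t^{\theta_i}\exp(\bfT_i t)$ near $t=0$ when $-1<\theta_i<0$ (fine, since $\exp(\bfT_i t)$ is bounded there and $t^{\theta_i}$ is locally integrable for $\theta_i>-1$) and exponential decay at infinity (from the spectral bound on $\bfT_i$), and then verifies that the resulting matrix equals $\Gamma(\theta_i+1)(-\bfT_i)^{-(\theta_i+1)}$ in the sense of the functional calculus. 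For integer $\theta_i$ this collapses to elementary repeated differentiation of the Laplace transform, recovering the classical phase-type moment formulas.
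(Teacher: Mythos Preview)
Your proof is correct and follows exactly the same route as the paper: condition on $J_0=j$, use the conditional independence to factorise into univariate PH quantities, and quote the standard PH Laplace transform and moment formulas. The paper is in fact considerably terser (for the moments it simply says ``a similar argument holds''), so your additional justification of the matrix integral identities and the fractional-power formula via the functional calculus only adds rigor without changing the approach.
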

\begin{proof}
We have that
\begin{align*}
\E(\exp(-uX))&=\sum_{j=1}^p\E(\exp(-uX)|J_0=j)\P(J_0=j)\\
&=\sum_{j=1}^p\pi_j\prod_{i=1}^d\E(\exp(-u_iX_i)|J_0=j)=\sum_{j=1}^p\pi_j\prod_{i=1}^d\bfe_j^{\mathsf{T}}(u_i\mat{I}-\bfT_i)^{-1}\bft_i.
\end{align*}
A similar argument holds for the moments.
\end{proof}
We may then easily calculate, for instance, the Pearson correlation between any pair of marginals.
For such result, we first denote by
$$\sigma_k=\left(2\sum_{j=1}^p\pi_j\bfe_j^{\mathsf{T}}(-\bfT_k)^{-2}\bfe -(\sum_{j=1}^p\pi_j\bfe_j^{\mathsf{T}}(-\bfT_k)^{-1}\bfe)^2\right)^{1/2},\quad k=1,\dots,d,$$ the standard deviation of each marginal. 
\begin{corollary}[Pearson correlation]
Let $X\sim \mbox{mPH}(\bfp,\mathcal{T})$ be multivariate phase-type random vector. Then the pairwise Pearson correlations are given by
\begin{align*}
&\rho_{X_k,X_l}\\
&=\frac{\sum_{j=1}^p\pi_j\bfe_j^{\mathsf{T}}(-\bfT_k)^{-1}\bfe \,\bfe_j^{\mathsf{T}}(-\bfT_l)^{-1}\bfe-(\sum_{j=1}^p\pi_j\bfe_j^{\mathsf{T}}(-\bfT_k)^{-1}\bfe)(\sum_{j=1}^p\pi_j\bfe_j^{\mathsf{T}}(-\bfT_l)^{-1}\bfe)}{\sigma_k\sigma_l
},
\end{align*}
with $k\neq l$.
\end{corollary}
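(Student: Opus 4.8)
The plan is to reduce the Pearson correlation entirely to the moment formulas already established in the preceding theorem, so that this corollary becomes a matter of substitution rather than a genuinely new computation. Recall that $\rho_{X_k,X_l} = \frac{\E(X_kX_l) - \E(X_k)\E(X_l)}{\sqrt{\mathrm{Var}(X_k)}\sqrt{\mathrm{Var}(X_l)}}$, so I need three ingredients: the mixed moment $\E(X_kX_l)$, the marginal means $\E(X_k),\E(X_l)$, and the marginal variances.

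First I would extract the first moments. Applying the moment formula $\E(X_1^{\theta_1}\cdots X_d^{\theta_d}) = \sum_{j=1}^p \pi_j \prod_{i=1}^d \Gamma(\theta_i+1)\bfe_j^\mathsf{T}(-\bfT_i)^{-\theta_i}\bfe$ with $\theta_k = 1$ and all other $\theta_i = 0$, and using $\Gamma(1)=\Gamma(2)=1$ together with $(-\bfT_i)^0 = \mat{I}$ and $\bfe_j^\mathsf{T}\mat{I}\bfe = 1$, I get $\E(X_k) = \sum_{j=1}^p \pi_j \bfe_j^\mathsf{T}(-\bfT_k)^{-1}\bfe$. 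Next, taking $\theta_k = \theta_l = 1$ (for $k\neq l$) and all other exponents zero yields the mixed second moment $\E(X_kX_l) = \sum_{j=1}^p \pi_j\, \bfe_j^\mathsf{T}(-\bfT_k)^{-1}\bfe\, \bfe_j^\mathsf{T}(-\bfT_l)^{-1}\bfe$; this is exactly where the conditional-independence structure pays off, since the two matrix factors sit side by side under a single sum over the shared initial state $j$. The covariance is then the numerator displayed in the statement. For the variances, taking $\theta_k = 2$ gives $\E(X_k^2) = \Gamma(3)\sum_{j=1}^p\pi_j\bfe_j^\mathsf{T}(-\bfT_k)^{-2}\bfe = 2\sum_{j=1}^p\pi_j\bfe_j^\mathsf{T}(-\bfT_k)^{-2}\bfe$, so $\mathrm{Var}(X_k) = 2\sum_{j=1}^p\pi_j\bfe_j^\mathsf{T}(-\bfT_k)^{-2}\bfe - \big(\sum_{j=1}^p\pi_j\bfe_j^\mathsf{T}(-\bfT_k)^{-1}\bfe\big)^2 = \sigma_k^2$, matching the definition of $\sigma_k$ given just before the corollary. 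Assembling the pieces over $\sigma_k\sigma_l$ gives the claimed formula.

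There is no real obstacle here — the only mild subtlety is making sure the moment theorem's hypothesis $\theta_i > -1$ is comfortably satisfied (it is, since all the exponents used are $0$, $1$, or $2$), and confirming that $(-\bfT_i)^{-\theta_i}$ for integer $\theta_i$ coincides with the ordinary matrix power via the holomorphic functional calculus defined in the notation section, which is immediate. So I would simply state that the result follows by substituting the first and (mixed) second moments from the Laplace transform and moments theorem into the definition of the Pearson correlation coefficient, and collecting terms.

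\begin{proof}
This is a direct consequence of the moment formula in the preceding theorem. Setting $\theta_k=1$ and $\theta_i=0$ for $i\neq k$ gives $\E(X_k)=\sum_{j=1}^p\pi_j\bfe_j^{\mathsf{T}}(-\bfT_k)^{-1}\bfe$, while $\theta_k=2$, $\theta_i=0$ for $i\neq k$ gives $\E(X_k^2)=2\sum_{j=1}^p\pi_j\bfe_j^{\mathsf{T}}(-\bfT_k)^{-2}\bfe$, so that $\mathrm{Var}(X_k)=\sigma_k^2$. For $k\neq l$, taking $\theta_k=\theta_l=1$ and all other exponents zero yields
\begin{align*}
\E(X_kX_l)=\sum_{j=1}^p\pi_j\,\bfe_j^{\mathsf{T}}(-\bfT_k)^{-1}\bfe\;\bfe_j^{\mathsf{T}}(-\bfT_l)^{-1}\bfe.
\end{align*}
Substituting these expressions into $\rho_{X_k,X_l}=\big(\E(X_kX_l)-\E(X_k)\E(X_l)\big)/(\sigma_k\sigma_l)$ gives the stated formula.
\end{proof}
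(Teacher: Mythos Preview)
Your proof is correct and matches the paper's approach exactly: the paper does not write out a proof for this corollary at all, but simply presents it as an immediate consequence of the moment theorem (``We may then easily calculate, for instance, the Pearson correlation between any pair of marginals''), which is precisely the substitution you carry out.
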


Copula-based methods are particularly interested in measures of dependence which are invariant under marginal transformations, and in particular, rank-based measures have gained popularity. We now proceed to show that for the mPH class, such rank-based dependence measures are also explicit. 

Recall that for  $X=(X_1,X_2)$ and $Z=(Z_1,Z_2)$  i.i.d. random vectors, Kendall's correlation at the population level is given by
\begin{align*}
\P((X_1-Z_1)(X_2-Z_2)>0)-\P((X_1-Z_1)(X_2-Z_2)<0).
\end{align*}

\begin{theorem}[Kendall correlation]
Let $X\sim \mbox{mPH}(\bfp,\mathcal{T})$ be multivariate phase-type random vector. Then the pairwise Kendall correlations are given for $k,l\in\{1,\dots,d\}$ by
\begin{align*}
&\tau_{X_k,X_l}\\
&=4\sum_{i=1}^p\sum_{j=1}^p\pi_i\pi_j   (\bfe_i^\mathsf{T}\otimes \bfe_j^\mathsf{T}) [-\bfT_k  \oplus \bfT_k ]^{-1} (\bfe\otimes\bft_k)
   (\bfe_i^\mathsf{T}\otimes \bfe_j^\mathsf{T})[-\bfT_l  \oplus \bfT_l ]^{-1}(\bfe\otimes\bft_l)-1.
\end{align*}
\end{theorem}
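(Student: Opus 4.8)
The plan is to compute the probability $\P((X_1-Z_1)(X_2-Z_2)>0)$ directly by conditioning on the initial states of the two independent copies $X$ and $Z$. Write $\tau_{X_k,X_l} = 2\P((X_k-Z_k)(X_l-Z_l)>0)-1$ using the fact that for continuous marginals the ``$<0$'' term equals $1$ minus the ``$>0$'' term, so it suffices to evaluate a single concordance probability. Conditioning on $J_0 = i$ for $X$ (with probability $\pi_i$) and $J_0 = j$ for $Z$ (with probability $\pi_j$), the four absorption times $X_k, X_l, Z_k, Z_l$ become conditionally independent phase-type variables: $X_k \mid J_0=i \sim \mathrm{PH}(\bfe_i^\mathsf{T},\bfT_k)$, $Z_k\mid J_0=j\sim\mathrm{PH}(\bfe_j^\mathsf{T},\bfT_k)$, and likewise for the $l$-components. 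The concordance event splits as $\{X_k>Z_k, X_l>Z_l\}\cup\{X_k<Z_k, X_l<Z_l\}$, and by the conditional independence this factorizes into a product of two ``comparison'' probabilities per branch.

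The key computational step is therefore the identity $\P(U > V) = \bfalp^{\mathsf{T}}(-\bfS\oplus\bfS)^{-1}(\bfe\otimes\bfs)$-type formula for two independent phase-type variables $U\sim\mathrm{PH}(\bfalp^\mathsf{T},\bfS)$ and $V\sim\mathrm{PH}(\bfbeta^\mathsf{T},\bfS)$ sharing the same sub-generator. I would derive this by writing $\P(U>V) = \int_0^\infty \P(U>v)\,f_V(v)\,dv = \int_0^\infty (\bfalp^{\mathsf{T}}e^{\bfS v}\bfe)(\bfbeta^{\mathsf{T}}e^{\bfS v}\bfs)\,dv$, turning the integrand into a Kronecker form $(\bfalp^{\mathsf{T}}\otimes\bfbeta^{\mathsf{T}})e^{(\bfS\oplus\bfS)v}(\bfe\otimes\bfs)$, and integrating using $\int_0^\infty e^{(\bfS\oplus\bfS)v}\,dv = (-\bfS\oplus\bfS)^{-1} = -(\bfS\oplus\bfS)^{-1}$ — valid since $\bfS\oplus\bfS$ inherits a strictly negative spectral abscissa from $\bfS$. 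Applying this with $\bfalp = \bfe_i$, $\bfbeta = \bfe_j$, $\bfS = \bfT_k$ gives $\P(X_k>Z_k\mid i,j) = (\bfe_i^\mathsf{T}\otimes\bfe_j^\mathsf{T})[-\bfT_k\oplus\bfT_k]^{-1}(\bfe\otimes\bft_k)$, and symmetrically $\P(X_k<Z_k\mid i,j) = (\bfe_j^\mathsf{T}\otimes\bfe_i^\mathsf{T})[-\bfT_k\oplus\bfT_k]^{-1}(\bfe\otimes\bft_k)$, which is the same expression with $i,j$ swapped.

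Putting it together, the conditional concordance probability given $(i,j)$ is
\begin{align*}
&(\bfe_i^\mathsf{T}\otimes\bfe_j^\mathsf{T})[-\bfT_k\oplus\bfT_k]^{-1}(\bfe\otimes\bft_k)\,(\bfe_i^\mathsf{T}\otimes\bfe_j^\mathsf{T})[-\bfT_l\oplus\bfT_l]^{-1}(\bfe\otimes\bft_l)\\
&\quad + (\bfe_j^\mathsf{T}\otimes\bfe_i^\mathsf{T})[-\bfT_k\oplus\bfT_k]^{-1}(\bfe\otimes\bft_k)\,(\bfe_j^\mathsf{T}\otimes\bfe_i^\mathsf{T})[-\bfT_l\oplus\bfT_l]^{-1}(\bfe\otimes\bft_l),
\end{align*}
and summing against $\pi_i\pi_j$ over all $i,j$, the two terms merge (relabel $i\leftrightarrow j$ in the second) into $2\sum_{i,j}\pi_i\pi_j(\bfe_i^\mathsf{T}\otimes\bfe_j^\mathsf{T})[-\bfT_k\oplus\bfT_k]^{-1}(\bfe\otimes\bft_k)(\bfe_i^\mathsf{T}\otimes\bfe_j^\mathsf{T})[-\bfT_l\oplus\bfT_l]^{-1}(\bfe\otimes\bft_l)$. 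Multiplying by $2$ and subtracting $1$ yields the claimed formula. The main obstacle I anticipate is bookkeeping the ordering inside the Kronecker products — ensuring that the copy indexed by $i$ (from $X$) and the copy indexed by $j$ (from $Z$) consistently occupy the same slots in both the $k$-factor and the $l$-factor, so that the $(i,j)$-swap symmetry genuinely collapses the two branches; a secondary point to handle carefully is the continuity of the marginals (so $\P((X_k-Z_k)(X_l-Z_l)=0)=0$), which holds since phase-type distributions are absolutely continuous on $(0,\infty)$.
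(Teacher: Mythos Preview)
Your proof is correct and, at the computational core, coincides with the paper's: both condition on the initial states $(i,j)$ of the two independent copies, factorize by conditional independence, and evaluate the resulting integrals $\int_0^\infty (\bfe_i^\mathsf{T}e^{\bfT_k x}\bfe)(\bfe_j^\mathsf{T}e^{\bfT_k x}\bft_k)\,dx$ via the Kronecker-sum trick. The only cosmetic difference is the entry point: the paper invokes the standard identity $\tau = 4\int_0^\infty\!\int_0^\infty S_{(X_k,X_l)}(x,y)f(x,y)\,dx\,dy - 1$ directly (which already has the symmetry built in), whereas you start from $\tau = 2\P(\text{concordant})-1$, split into the two branches, and then collapse them by the $i\leftrightarrow j$ relabeling---an extra but harmless detour.
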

\begin{proof}
It is standard that
\begin{align*}
\tau_{X_k,X_l}&=4 \int_{0}^{\infty} \int_{0}^{\infty} S_{(X_k,X_l)}(x, y) f(x, y) d x d y-1,\end{align*}
and so
\begin{align*}
&(\tau_{X_k,X_l}+1)/4\\
&=\int_{0}^{\infty} \int_{0}^{\infty} 
\sum_{i=1}^p\pi_i \bfe_i^\mathsf{T}\exp(\bfT_k x)\bfe\, \bfe_i^\mathsf{T}\exp(\bfT_l y)\bfe
 \sum_{j=1}^p\pi_j \bfe_j^\mathsf{T}\exp(\bfT_k x)\bft_k \bfe_j^\mathsf{T}\exp(\bfT_l y)\bft_l  
 d x d y\\
 &=\sum_{i=1}^p\sum_{j=1}^p\int_{0}^{\infty} \int_{0}^{\infty} 
\pi_i \bfe_i^\mathsf{T}\exp(\bfT_k x)\bfe\, \bfe_i^\mathsf{T}\exp(\bfT_l y)\bfe
 \pi_j \bfe_j^\mathsf{T}\exp(\bfT_k x)\bft_k \bfe_j^\mathsf{T}\exp(\bfT_l y)\bft_l  
 d x d y\\
  &=\sum_{i=1}^p\sum_{j=1}^p\pi_i\pi_j \int_{0}^{\infty}  \bfe_i^\mathsf{T}\exp(\bfT_k x)\bfe\,  \bfe_j^\mathsf{T}\exp(\bfT_k x)\bft_k dx
  \int_{0}^{\infty} 
   \bfe_i^\mathsf{T}\exp(\bfT_l y)\bfe
 \bfe_j^\mathsf{T}\exp(\bfT_l y)\bft_l  
 d y\\
  &=\sum_{i=1}^p\sum_{j=1}^p\pi_i\pi_j \int_{0}^{\infty}  (\bfe_i^\mathsf{T}\otimes \bfe_j^\mathsf{T})\exp(\bfT_k x \oplus \bfT_k x)(\bfe\otimes\bft_k) dx\\
  &\quad\quad\quad\quad\quad\quad \times \int_{0}^{\infty}(\bfe_i^\mathsf{T}\otimes \bfe_j^\mathsf{T})\exp(\bfT_l y \oplus \bfT_l y)(\bfe\otimes\bft_l)  
 d y\\
   &=\sum_{i=1}^p\sum_{j=1}^p\pi_i\pi_j   (\bfe_i^\mathsf{T}\otimes \bfe_j^\mathsf{T}) [-\bfT_k  \oplus \bfT_k ]^{-1} (\bfe\otimes\bft_k)
   (\bfe_i^\mathsf{T}\otimes \bfe_j^\mathsf{T})[-\bfT_l  \oplus \bfT_l ]^{-1}(\bfe\otimes\bft_l).  
\end{align*}
\end{proof}

Similarly, for  $X=(X_1,X_2)$, $(Y_1,Y_2)$ and $Z=(Z_1,Z_2)$  i.i.d. random vectors, Spearman's correlation at the population level is given by
\begin{align*}
\P((X_1-Z_1)(X_2-Y_2)>0)-\P((X_1-Y_1)(X_2-Z_2)<0).
\end{align*}

\begin{theorem}[Spearman correlation]
Let $X\sim \mbox{mPH}(\bfp,\mathcal{T})$ be multivariate phase-type random vector. Then the pairwise Spearman correlations are given for $k,l\in\{1,\dots,d\}$ by
\begin{align*}
&\rho^S_{X_k,X_l}\\
&=12\sum_{j=1}^p\pi_j
\left(1+(\bfp\otimes \bfe_j^{\mathsf{T}})[\bfT_k \oplus \bfT_k]^{-1}(\bfe\otimes \bft_k)\right)
 \left(1+(\bfp\otimes \bfe_j^{\mathsf{T}})[\bfT_l \oplus \bfT_l]^{-1}(\bfe\otimes \bft_l)\right)-3.
\end{align*}
\end{theorem}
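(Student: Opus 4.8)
The plan is to mimic the structure of the proof of the Kendall correlation just above, but starting from the standard integral representation of Spearman's rho in terms of the copula, or equivalently in terms of the joint and marginal distribution functions. Recall that for a bivariate vector $(X_k, X_l)$ with continuous margins, $\rho^S_{X_k, X_l} = 12 \int_0^\infty \int_0^\infty F_{X_k}(x) F_{X_l}(y)\, f_{(X_k,X_l)}(x,y)\, dx\, dy - 3$, where $f_{(X_k,X_l)}$ is the joint density of the pair. (This is one of the classical equivalent forms; it comes from writing Spearman's rho as the correlation of the probability-integral-transformed variables and using the three-i.i.d.-copies formula quoted before the theorem.) So first I would record this formula, then substitute the closed-form expressions from the earlier Corollaries: $F_{X_k}(x) = 1 - \bfp^{\mathsf T}\exp(\bfT_k x)\bfe$ restricted to the $j$-th initial state gives $1 - \bfe_j^{\mathsf T}\exp(\bfT_k x)\bfe$, and the joint density of $(X_k, X_l)$ conditional on $J_0 = j$ factorizes as $\bfe_j^{\mathsf T}\exp(\bfT_k x)\bft_k \cdot \bfe_j^{\mathsf T}\exp(\bfT_l y)\bft_l$.

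Next I would condition on $J_0 = j$ and pull the sum outside: the integrand becomes $\sum_{j=1}^p \pi_j \bigl(1 - \bfe_j^{\mathsf T}\exp(\bfT_k x)\bfe\bigr)\bigl(1 - \bfe_j^{\mathsf T}\exp(\bfT_l y)\bfe\bigr) \cdot \bigl(\bfp\text{-mixture density}\bigr)$ — here one must be careful that $F_{X_k}$ is the \emph{unconditional} marginal c.d.f. while the joint density is being decomposed over $J_0$, so $F_{X_k}(x) = \sum_i \pi_i\bigl(1 - \bfe_i^{\mathsf T}\exp(\bfT_k x)\bfe\bigr) = 1 - \bfp\exp(\bfT_k x)\bfe$ stays as a single vector-form factor and is \emph{not} indexed by $j$. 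This is exactly why the final answer has $\bfp\otimes\bfe_j^{\mathsf T}$ rather than $\bfe_j^{\mathsf T}\otimes\bfe_j^{\mathsf T}$ as in Kendall's tau. After this bookkeeping the $x$-integral and the $y$-integral separate, and each has the form $\int_0^\infty \bigl(1 - \bfp\exp(\bfT_k x)\bfe\bigr)\bfe_j^{\mathsf T}\exp(\bfT_k x)\bft_k\, dx$. Splitting the $(1 - \cdots)$ gives two pieces: $\int_0^\infty \bfe_j^{\mathsf T}\exp(\bfT_k x)\bft_k\, dx = \bfe_j^{\mathsf T}(-\bfT_k)^{-1}\bft_k = 1$ (a full density integrates to one conditionally), and $-\int_0^\infty \bfp\exp(\bfT_k x)\bfe\cdot\bfe_j^{\mathsf T}\exp(\bfT_k x)\bft_k\, dx$, which by the Kronecker-product trick used in the tau proof equals $-(\bfp\otimes\bfe_j^{\mathsf T})\int_0^\infty \exp(\bfT_k x\oplus\bfT_k x)\, dx\,(\bfe\otimes\bft_k) = (\bfp\otimes\bfe_j^{\mathsf T})[\bfT_k\oplus\bfT_k]^{-1}(\bfe\otimes\bft_k)$, using $\int_0^\infty \exp(\mat{A}x)dx = -\mat{A}^{-1}$ for a subintensity-type matrix $\mat A$ (note the sign: here $\mat A = \bfT_k\oplus\bfT_k$ already, so $-\mat A^{-1} = -[\bfT_k\oplus\bfT_k]^{-1}$, and combined with the leading minus sign one gets $+[\bfT_k\oplus\bfT_k]^{-1}$... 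I'd double-check the sign convention here against the tau computation, which uses $[-\bfT_k\oplus\bfT_k]^{-1}$ — the discrepancy is because tau integrates $\exp(\bfT_k x)\bfe$ against $\exp(\bfT_k x)\bft_k$ whereas Spearman integrates $\exp(\bfT_k x)\bfe$ against $\exp(\bfT_k x)\bft_k$ with the \emph{other} factor being the full $\bfp$-mixture, but the matrix being inverted is the same $\bfT_k\oplus\bfT_k$; the sign in the statement is $+[\bfT_k\oplus\bfT_k]^{-1}$, so I'll trust the bracket $[\bfT_k\oplus\bfT_k]^{-1}$ as written). Hence each marginal integral equals $1 + (\bfp\otimes\bfe_j^{\mathsf T})[\bfT_k\oplus\bfT_k]^{-1}(\bfe\otimes\bft_k)$, the product over $k$ and $l$ appears inside the sum over $j$ weighted by $\pi_j$, multiply by $12$ and subtract $3$, giving exactly the claimed formula.

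The main obstacle is not any hard analysis but the correct choice among the several equivalent classical representations of Spearman's rho and the attendant bookkeeping: one must use the form $12\iint F_{X_k}F_{X_l}\,f_{(X_k,X_l)}\,dxdy - 3$ (rather than, say, $12\iint C(u,v)dudv - 3$) so that the marginal c.d.f.'s enter as the full $\bfp$-mixtures while only the joint density is conditioned on $J_0$; this asymmetry between the two factors is precisely what produces $\bfp\otimes\bfe_j^{\mathsf T}$ in place of Kendall's $\bfe_j^{\mathsf T}\otimes\bfe_j^{\mathsf T}$. I would also verify that the interchange of sum and integral and the separation of the double integral are justified — these are routine since everything is absolutely convergent (phase-type tails are exponential), but worth a sentence. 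With those points settled, the computation is a direct parallel of the Kendall proof and I would present it in the same telescoped display format.
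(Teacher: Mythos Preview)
Your proposal is correct and follows essentially the same route as the paper: start from the integral representation $\rho^S = 12\iint F_{X_k}(x)F_{X_l}(y)f_{(X_k,X_l)}(x,y)\,dx\,dy-3$, decompose only the joint density over $J_0=j$ while keeping the marginal c.d.f.'s as the full $\bfp$-mixture $1-\bfp\exp(\bfT_k x)\bfe$, separate the double integral, and evaluate each factor via the Kronecker identity $\int_0^\infty\exp((\bfT_k\oplus\bfT_k)x)\,dx = -[\bfT_k\oplus\bfT_k]^{-1}$. Your sign bookkeeping is right and matches the paper's final line (which writes the equivalent $1-(\bfp\otimes\bfe_j^{\mathsf T})[-\bfT_k\oplus\bfT_k]^{-1}(\bfe\otimes\bft_k)$).
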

\begin{proof}
It is standard that
\begin{align*}
\tau_{X_k,X_l}&=12 \int_{0}^{\infty} \int_{0}^{\infty} F_{X_k}(x)F_{X_l}(y) f(x, y) d x d y-3,\end{align*}
and so
\begin{align*}
&(\rho^S_{X_k,X_l}+3)/12\\
&=\int_{0}^{\infty} \int_{0}^{\infty} 
(1-\bfp\exp(\bfT_k x)\bfe)\, (1-
\bfp \exp(\bfT_l y)\bfe)\,
 \sum_{j=1}^p\pi_j \bfe_j^\mathsf{T}\exp(\bfT_k x)\bft_k \bfe_j^\mathsf{T}\exp(\bfT_l y)\bft_l  
 d x d y\\
 &=\sum_{j=1}^p\pi_j\int_{0}^{\infty}(1-\bfp\exp(\bfT_k x)\bfe) \bfe_j^\mathsf{T}\exp(\bfT_k x)\bft_k dx\, \int_{0}^{\infty} 
 (1-
\bfp \exp(\bfT_l y)\bfe)\,
  \bfe_j^\mathsf{T}\exp(\bfT_l y)\bft_l  
 d y\\
 &=\sum_{j=1}^p\pi_j
\left(1-\int_0^\infty(\bfp\otimes \bfe_j^{\mathsf{T}})\exp(\bfT_k x\oplus \bft_kx)(\bfe\otimes \bft_k)dx\right)\\
&\quad\quad\quad\quad\times
 \left(1-\int_0^\infty(\bfp\otimes \bfe_j^{\mathsf{T}})\exp(\bfT_l y\oplus \bft_ly)(\bfe\otimes \bft_l)dy\right)\\
 &=\sum_{j=1}^p\pi_j
\left(1-(\bfp\otimes \bfe_j^{\mathsf{T}})[-\bfT_k \oplus \bfT_k]^{-1}(\bfe\otimes \bft_k)\right)
 \left(1-(\bfp\otimes \bfe_j^{\mathsf{T}})[-\bfT_l \oplus \bfT_l]^{-1}(\bfe\otimes \bft_l)\right). 
\end{align*}
\end{proof}

\begin{remark}\rm
{
Although, in principle, by their denseness property, any correlation is achievable for a large enough dimension $p$, it is worth noticing that for a fixed $p$, the entire range of correlation may not be captured. For instance, for $p=1$, we have two independent exponential random variables, so the correlation can only be zero.
}
\end{remark}

\begin{remark}[Copulas]\rm
The mPH class in not based on copulas. Nonetheless, it may give rise to very different shapes of copula densities with a relatively small state space. The copula densities, as defined for a bivariate random vector by
\begin{align*}
c_X(u,v)=\frac{f_X(x_1,x_2)}{f_{X_1}(x_1)f_{X_2}(x_2)},\quad u=F_{X_1}(x_1),\:v=F_{X_2}(x_2)\:\in [0,1]
\end{align*}
are in general not explicit, but can be very effectively calculated. 

Consider for instance $p=3$, and the following sojourn intensities: $a=5$, $b=20$ and $c=140$. We define six mPH models according to the following parameters:
\begin{align*}
\bfp=\bfe /3,\quad 
\bfT_1=\left(\begin{matrix}{}
  -a&1& 1 \\
  1 &  -b &1 \\
  1& 1& -c  \\
\end{matrix}\right)
\end{align*}
\begin{align*}
 \bfT_2=\left(\begin{matrix}{}
  -\alpha^{(1)}&1& 1 \\
  1 &  -\alpha^{(2)} &1 \\
  1& 1& -\alpha^{(3)}  \\
\end{matrix}\right), \quad (\alpha^{(1)},\alpha^{(2)},\alpha^{(3)})\in \mathcal{P}((a,b,c)),
\end{align*}
where $\mathcal{P}((a,b,c))$ is the set of all six permutations of the vector $(a,b,c)$. It follows immediately that all six mPH models have the same marginals, and thus the copulas, depicted by their contours in Figure \ref{copulas}, are a standardized measure of dependence when permuting the state space of the second marginal.

\begin{figure}[!htbp]
\centering
\includegraphics[width=0.8\textwidth]{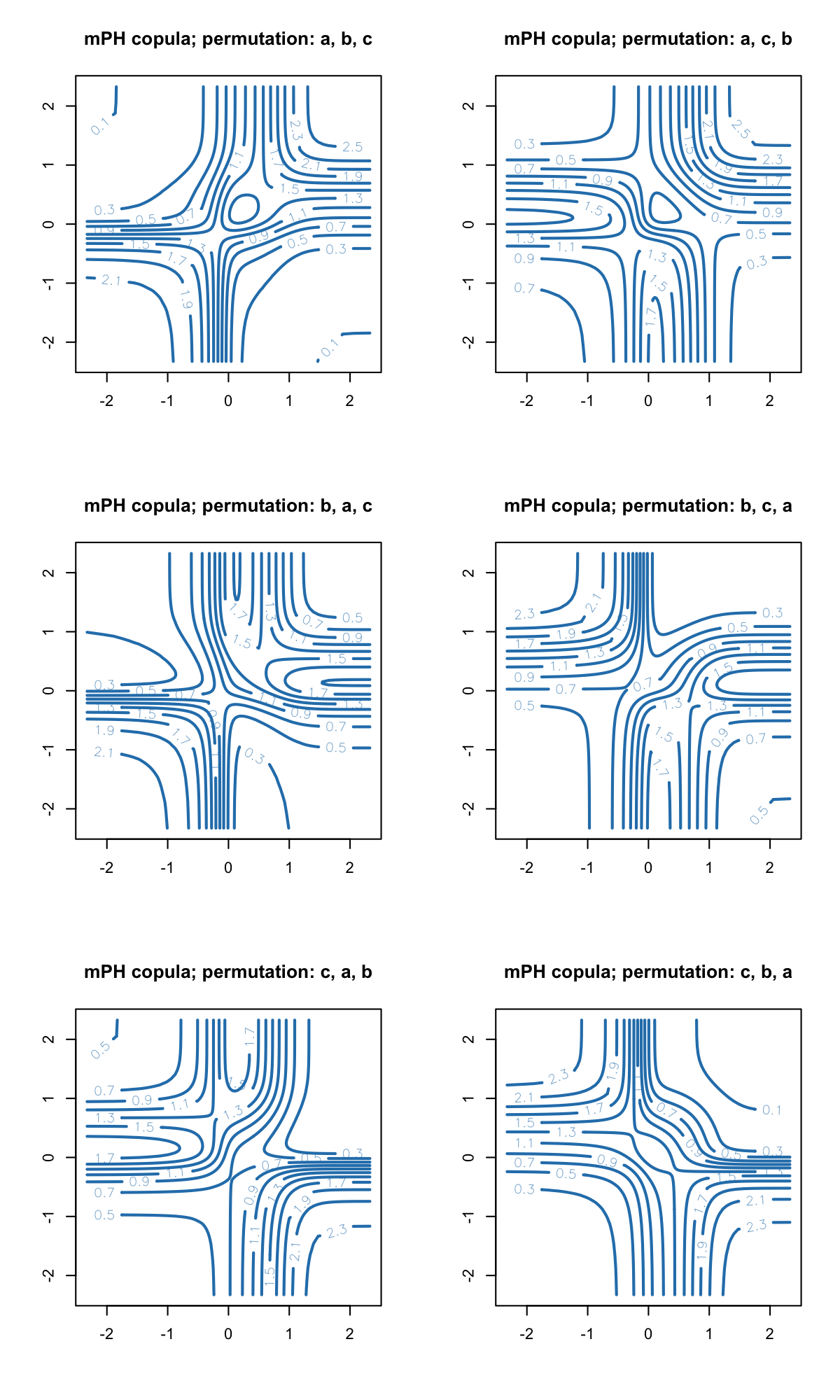}
\caption{{ Copulas associated with the density functions of a bivariate mPH distribution based on permuting the state-space of the second marginal.  Standard Gaussian margins are used for visualization purposes.}
} \label{copulas}
\end{figure}

\end{remark}

\section{Denseness on the positive orthant}\label{sec:dens}
The denseness of infinite mixtures of Erlang vectors was established in \cite{lee2012modeling}. We show that the same holds true for finite mixtures, as a way to establish the property for general finite-order mPH distributions. Although a substantial part of the proof could be taken from the latter reference, we proceed using an alternative and elementary method of proof, inspired by the univariate case given in \cite{johnson1988denseness}{, also based on Erlang substructures}. This method of proof also sheds light into one possible constructive way of interpreting additional mixture components -- or more generally, additional elements of the state-space -- with respect to their contribution towards approximating a target distribution.

Recall that weak convergence is characterised by point-wise convergence of the multivariate cumulative distribution functions at continuity points of the limit.

Define for any $\lambda>0$, 
$$d_F(\lambda,k)=\P(X\in C(\lambda,k)), \quad k\in\mathbb{N}^d,$$
where $C(\lambda,k)=\{x|\, x_i\in((k_i-1)/\lambda,k_i/\lambda], \,i=1,\dots,d\}$. Further, let 
$$E_k(u;\lambda)=1-\sum_{l=1}^{k-1}\frac{1}{l!}(\lambda u)^l\exp(-\lambda u)=1-\sum_{l=1}^{k-1}p_l(u),\quad u>0,$$
i.e. the univariate Erlang cumulative distribution function, and with $p_l(u)$, $l=1,2,\dots$, being Poisson probabilities with mean $\lambda u$.

\begin{theorem}[Denseness of infinite mixtures]
Let $F$ be a distribution concentrated on $\mathbb{R}_+^d$. Define for $n\in\mathbb{N}$	,
\begin{align*}
F_n(x)= \sum_{k_1=1}^\infty\cdots\sum_{k_d=1}^\infty d_F(n,(k_1,\dots,k_d))E_{k_1}(x_1;n)\dots E_{k_d}(x_d;n).
\end{align*}
Then for each $n\in\mathbb{N}$, $F_n$ is a
distribution function and
$$F_n(x)\to F(x),\quad n\to\infty $$
at every point of continuity $x$ of $F$.
\end{theorem}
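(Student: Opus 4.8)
The plan is to realise $F_n$ as the law of an explicit random vector built from $F$ together with an independent family of unit-rate exponentials, and then to prove that this random vector converges almost surely to $X\sim F$ as $n\to\infty$; weak convergence together with the characterisation via continuity points recalled above then yields the claim.

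That each $F_n$ is a $d$-variate distribution function is the easy part. Since the half-open boxes $\{C(n,k)\}_{k\in\mathbb{N}^d}$ partition $(0,\infty)^d$ and $F$ is concentrated there, the weights $d_F(n,k)$ are nonnegative and sum to one; each $\prod_{i=1}^d E_{k_i}(x_i;n)$ is a product of univariate Erlang distribution functions, hence itself the distribution function of a vector with independent Erlang coordinates; and $F_n$ is the corresponding countable convex combination. Nonnegativity, monotonicity, right-continuity (interchanging limits with the sum by dominated convergence, the summands being bounded by $d_F(n,k)$), and the correct limiting behaviour at the origin and at infinity then follow routinely.

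For the convergence, fix $X\sim F$ and, independently of it, an i.i.d.\ array $\{\xi^{(i)}_j:\ i=1,\dots,d,\ j\ge 1\}$ of rate-one exponentials. Put $K^{(i)}_n=\lceil nX_i\rceil$ and
\[
W^{(n)}_i=\frac1n\sum_{j=1}^{K^{(i)}_n}\xi^{(i)}_j,\qquad i=1,\dots,d.
\]
Conditionally on $X$ the coordinates $W^{(n)}_1,\dots,W^{(n)}_d$ are independent (they read off disjoint blocks of the array), and $W^{(n)}_i\mid X$ is Erlang with shape $K^{(i)}_n$ and rate $n$, since a sum of $m$ rate-one exponentials scaled by $1/n$ is Erlang$(m,n)$. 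Because $\{K_n=k\}=\{X\in C(n,k)\}$ and hence $\P(K_n=k)=d_F(n,k)$, we obtain
\begin{align*}
\P\!\left(W^{(n)}\le x\right)
&=\E\!\left[\prod_{i=1}^d E_{K^{(i)}_n}(x_i;n)\right]\\
&=\sum_{k\in\mathbb{N}^d}d_F(n,k)\prod_{i=1}^d E_{k_i}(x_i;n)
=F_n(x),
\end{align*}
so $F_n$ is precisely the distribution function of $W^{(n)}$.

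It remains to check that $W^{(n)}\to X$ almost surely. On the probability-one event $\{X_i>0\text{ for every }i\}$ one has $K^{(i)}_n\to\infty$ and $K^{(i)}_n/n\to X_i$; writing $W^{(n)}_i=(K^{(i)}_n/n)\cdot(K^{(i)}_n)^{-1}\sum_{j=1}^{K^{(i)}_n}\xi^{(i)}_j$ and applying the strong law of large numbers to the $\xi^{(i)}_j$ (so that the second factor tends to $\E[\xi^{(i)}_1]=1$ a.s.\ as the number of summands tends to infinity), we get $W^{(n)}_i\to X_i$ a.s., and hence $W^{(n)}\to X$ a.s.\ after intersecting over the $d$ coordinates. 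Thus $W^{(n)}\Rightarrow X$, and by the recalled characterisation of weak convergence $F_n(x)=\P(W^{(n)}\le x)\to F(x)$ at every continuity point $x$ of $F$. The crux of the argument is spotting this coupling: once $F_n$ is identified as the law of $W^{(n)}$, both the distribution-function property and the limit are routine, the latter being just the strong law for a random number of summands. The one point requiring care is that $\{C(n,k)\}_{k\in\mathbb{N}^d}$ genuinely exhausts the support of $F$ (equivalently $\sum_k d_F(n,k)=1$), which is where concentration on the open positive orthant is used. As an alternative to the almost-sure argument one may note that for $s\in\mathbb{R}_+^d$, $\E\big[\prod_i e^{-s_iW^{(n)}_i}\big]=\E\big[\prod_i(1+s_i/n)^{-\lceil nX_i\rceil}\big]\to\E\big[e^{-\langle s,X\rangle}\big]$ by bounded convergence, and convergence of joint Laplace transforms of nonnegative random vectors implies weak convergence.
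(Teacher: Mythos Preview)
Your proof is correct and complete, but it takes a different route from the paper's. The paper does not construct a random vector with law $F_n$; instead it manipulates the defining sum algebraically. Writing the Erlang distribution function as a Poisson tail, $E_k(x_i;n)=1-\sum_{l=0}^{k-1}p_l(x_i)$, and swapping the order of summation, the paper obtains
\[
F_n(x)=\sum_{i_1\ge 0}\cdots\sum_{i_d\ge 0} p_{i_1}(x_1)\cdots p_{i_d}(x_d)\,F(i_1/n,\dots,i_d/n)=\E\!\big[F\big(Y_{n}/n\big)\big],
\]
where $Y_{n,i}\sim\text{Poisson}(n x_i)$ are independent. Since $Y_{n,i}/n\to x_i$ almost surely along a Poisson process coupling, dominated convergence yields $F_n(x)\to F(x)$ at continuity points. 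In short, the paper averages the \emph{target} distribution function against a Poisson kernel and passes to the limit in the kernel, whereas you couple a random vector with law $F_n$ to $X$ and invoke the strong law for a random number of summands. Your construction has the advantage of bypassing the sum-swapping computation and of exhibiting a single probability space on which all $F_n$ live together with $F$; the paper's approach has the advantage that the limiting argument is a one-line application of dominated convergence to the bounded function $F$, with no need to discuss random indices in the law of large numbers. The two arguments are in a sense dual: one integrates $F$ against Poisson weights, the other mixes Erlangs according to $F$.
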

\begin{proof}
The sum of all $d_F(n,(k_1,\dots,k_d))\ge 0$ is unity, since they measure probabilities at sets which generate a partition. Thus $F_n(x)$ is a multivariate distribution function. Now let $x$ be a continuity point of $F$. Now,
\begin{align*}
&F_n(x)\\
&=\sum_{k_1=1}^\infty\cdots\sum_{k_d=1}^\infty d_F(n,(k_1,\dots,k_d))E_{k_1}(x_1;n)\dots E_{k_d}(x_d;n)\\
&=\sum_{k_1=1}^\infty\cdots\sum_{k_{d-1}=1}^\infty E_{k_1}(x_1;n)\dots E_{k_{d-1}}(x_d;n) \sum_{k_d=1}^\infty d_F(n,(k_1,\dots,k_d)) (1-\sum_{i=1}^{k_d-1}p_i(x_d)).
\end{align*}
Notice that 
\begin{align*}
&\sum_{k_d=1}^\infty d_F(n,(k_1,\dots,k_d)) (1-\sum_{i=1}^{k_d-1}p_i(x_d))\\
&=\sum_{k_d=1}^\infty d_F(n,(k_1,\dots,k_d)) -\sum_{k_d=1}^\infty d_F(n,(k_1,\dots,k_d))\sum_{i=1}^{k_d-1}p_i(x_d)\\
&=\sum_{k_d=1}^\infty d_F(n,(k_1,\dots,k_d)) -\sum_{i=0}^{\infty}p_i(x_d)\sum_{k_d=i+1}^\infty d_F(n,(k_1,\dots,k_d))\\
&=\sum_{i=0}^{\infty}p_i(x_d)\sum_{k_d=1}^{i} d_F(n,(k_1,\dots,k_d)).
\end{align*}
Thus, applying the last relation $d$ times, we obtain
\begin{align*}
F_n(x)&=\sum_{i_1=1}^\infty\cdots\sum_{i_d=1}^\infty p_{i_1}(x_1)\dots p_{i_d}(x_d)
\sum_{k_1=1}^{i_1}\dots\sum_{k_d=1}^{i_d}d_F(n,(k_1,\dots,k_d))\\
&=\sum_{i_1=1}^\infty\cdots\sum_{i_d=1}^\infty p_{i_1}(x_1)\dots p_{i_d}(x_d)
F(i).\\
\end{align*}
Now define the independent random variables $Y_{n,i}\sim\mbox{Poisson}(n\cdot x_i)$, and $X_{n,i}=Y_{n,i}/n$. From the above expression it is follows that 
$$\E(F((X_{n,1},\dots,X_{n,d})))=F_n(x).$$ Since, almost surely, $$\lim_{n\to \infty}X_{n,i}= x_i,\quad i=1,\dots,d,$$
then dominated convergence implies that $$F_{n}(x)\to \E(F(x))= F(x),\quad n\to\infty,$$ as desired.
\end{proof}

We immediately obtain the following result.

\begin{corollary}
Let $X$ be a random vector in $\mathbb{R}_+^d$. Then there exist a sequence of random vectors $\{X_n\}_{n\in\mathbb{N}}$ in $\mathbb{R}_+^d$, where each $X_n$ has a distribution that is an infinite mixture of $d$-dimesional random vectors with independent Erlang components, such that
$$X_n\stackrel{d}{\to}X,\quad n\to\infty.$$
\end{corollary}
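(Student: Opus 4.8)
The plan is to read the corollary off directly from the preceding Theorem, the only work being to make explicit the probabilistic structure hidden in the formula for $F_n$. First I would let $F$ denote the distribution function of $X$ and apply the Theorem with this $F$, obtaining for each $n\in\mathbb{N}$ the distribution function
\begin{align*}
F_n(x)=\sum_{k_1=1}^\infty\cdots\sum_{k_d=1}^\infty d_F(n,(k_1,\dots,k_d))E_{k_1}(x_1;n)\cdots E_{k_d}(x_d;n).
\end{align*}
The Theorem already certifies that each $F_n$ is a genuine multivariate distribution function, so there exists a random vector $X_n$ in $\mathbb{R}_+^d$ with $F_{X_n}=F_n$.

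Second, I would identify the law of $X_n$ as a mixture. Let $K=(K_1,\dots,K_d)$ be a random index on $\mathbb{N}^d$ with $\P(K=(k_1,\dots,k_d))=d_F(n,(k_1,\dots,k_d))$; this is a valid probability mass function, since the proof of the Theorem records that these weights are nonnegative and sum to one. Conditionally on $K=(k_1,\dots,k_d)$, let the components of $X_n$ be independent with $X_{n,i}$ Erlang-distributed of shape $k_i$ and rate $n$, i.e.\ with marginal c.d.f.\ $E_{k_i}(\cdot;n)$. Then the conditional c.d.f.\ of $X_n$ factorises as $\prod_{i=1}^d E_{k_i}(x_i;n)$, and averaging over $K$ reproduces exactly $F_n$. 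Hence the distribution of $X_n$ is an infinite mixture of $d$-dimensional random vectors with independent Erlang components, precisely as claimed.

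Finally, I would invoke the convergence conclusion of the Theorem, $F_n(x)\to F(x)$ at every continuity point $x$ of $F$, and appeal to the characterisation of weak convergence recalled immediately before the Theorem, namely that pointwise convergence of the multivariate c.d.f.s at continuity points of the limit is exactly convergence in distribution. This yields $X_n\stackrel{d}{\to}X$ as $n\to\infty$ and finishes the proof. There is no genuine obstacle here: the statement is an immediate corollary, and the only point deserving a moment's attention is the routine bookkeeping that the stated mixture-of-independent-Erlangs structure matches the formula for $F_n$ and that the multivariate notion of weak convergence is the one supplied by the Theorem.
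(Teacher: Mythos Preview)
Your proposal is correct and matches the paper's approach: the paper simply states ``We immediately obtain the following result'' and offers no separate proof, so your argument is precisely the routine unpacking of the theorem that the authors leave implicit. The identification of $F_n$ as a mixture over a random index $K\in\mathbb{N}^d$ with conditionally independent Erlang components, together with the weak-convergence characterisation already recalled in the paper, is exactly the intended reading.
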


We now allow for finite mixtures as well.

\begin{theorem}[Approximation by finite mixtures]
Let $F$ be a distribution concentrated on $\mathbb{R}_+^d$. Define for $n\in\mathbb{N}$ and	{$m=(m_1,\dots,m_d)\in\mathbb{N}^d$},
\begin{align*}
F_{m,n}(x)=\frac{1}{F(m/n)} \sum_{k_1=1}^{m_1}\cdots\sum_{k_d=1}^{m_d} d_F(n,(k_1,\dots,k_d))E_{k_1}(x_1;n)\dots E_{k_d}(x_d;n).
\end{align*}
Then $$\lim_{m\to\infty}\sup_{x\in\mathbb{R}_+^d}|F_n(x)-F_{m,n}(x)| =0.$$
\end{theorem}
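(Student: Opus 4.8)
The plan is to compare $F_{m,n}$ directly with the infinite mixture $F_n$ from the preceding theorem, using that truncating the mixing weights and renormalising perturbs $F_n$ only by the discarded tail mass — and that this perturbation is uniform in $x$ because every Erlang c.d.f.\ takes values in $[0,1]$.

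First I would record that $F(m/n)=\sum_{k_1=1}^{m_1}\cdots\sum_{k_d=1}^{m_d} d_F(n,(k_1,\dots,k_d))$: the cubes $C(n,k)$ with $k\le m$ partition $(0,m_1/n]\times\cdots\times(0,m_d/n]$, and $F$ places no mass on the coordinate hyperplanes $\{x_i=0\}$, since the full family $\{C(n,k):k\in\mathbb{N}^d\}$ already carries total mass one (as established in the proof of that theorem). Setting $\varepsilon_m:=1-F(m/n)=\sum_{k\not\le m}d_F(n,k)=\P\!\big(\bigcup_{i=1}^d\{X_i>m_i/n\}\big)$, one has $\varepsilon_m\le\sum_{i=1}^d\P(X_i>m_i/n)\to 0$ as $m\to\infty$ (i.e. $\min_i m_i\to\infty$); in particular $F(m/n)>0$ for all large $m$, so $F_{m,n}$ is well defined.

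Next, for such $m$ I would split
\begin{align*}
F_n(x)-F_{m,n}(x)=\sum_{k\not\le m} d_F(n,k)\prod_{i=1}^d E_{k_i}(x_i;n)\;+\;\Big(1-\frac{1}{F(m/n)}\Big)\sum_{k\le m} d_F(n,k)\prod_{i=1}^d E_{k_i}(x_i;n),
\end{align*}
the sums ranging over $k\in\mathbb{N}^d$. Because $d_F(n,k)\ge 0$ and $0\le E_{k_i}(\cdot;n)\le 1$, the first sum is bounded in absolute value by $\sum_{k\not\le m}d_F(n,k)=\varepsilon_m$, whereas the second sum lies in $[0,F(m/n)]$; since $\big|1-1/F(m/n)\big|=\varepsilon_m/F(m/n)$, the whole second term is at most $\varepsilon_m$ in absolute value. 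Hence $\sup_{x\in\mathbb{R}_+^d}|F_n(x)-F_{m,n}(x)|\le 2\varepsilon_m$, and letting $m\to\infty$ yields the claim.

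There is no genuine obstacle: the statement is a routine truncation-plus-renormalisation estimate, and the only points deserving a line are the uniformity in $x$ (immediate from $E_k(\cdot;n)\in[0,1]$) and the bookkeeping for the renormalising constant $1/F(m/n)$. (Incidentally, each $F_{m,n}$ is itself a bona fide distribution function, being a finite convex combination of the products $\prod_{i=1}^d E_{k_i}(\cdot;n)$, so the result says precisely that such finite Erlang-type mixtures approximate $F_n$ uniformly; chained with the preceding theorem and its corollary this is what gives denseness of the mPH class.)
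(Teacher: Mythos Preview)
Your argument is correct and is essentially the same as the paper's: split $F_n-F_{m,n}$ into the discarded tail of the mixture plus the renormalisation error, bound each piece by $1-F(m/n)$ using $0\le E_{k_i}(\cdot;n)\le1$, and conclude that the supremum over $x$ is at most $2(1-F(m/n))\to0$. If anything, your write-up is slightly more careful than the paper's, since you take the tail over $\{k\not\le m\}$ (the actual complement of the truncation box) rather than the smaller corner $\{k_i>m_i\ \text{for all }i\}$, and you explicitly check that $F(m/n)>0$ eventually so that $F_{m,n}$ is well defined.
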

\begin{proof}
{ First note that for $X\sim F$,
\begin{align*}
&\sum_{k_1=m_1+1}^\infty\cdots\sum_{k_d=m_d+1}^\infty d_F(n,(k_1,\dots,k_d))\\
&=\P(X_1>m_1/n,\dots, X_d>m_d/n)\\
&\le 1-\P(X_1\le m_1/n,\dots, X_d \le m_d/n)=1-F(m/n).
\end{align*}
Then we have that
}
\begin{align*}
&|F_n(x)-F_{m,n}(x)|\\
&\le\left|\sum_{k_1=m_1+1}^\infty\cdots\sum_{k_d=m_d+1}^\infty d_F(n,(k_1,\dots,k_d))E_{k_1}(x_1;n)\dots E_{k_d}(x_d;n)\right|\\
&+\left|
\left(1-\frac{1}{F(m/n)}\right) \sum_{k_1=1}^{m_1}\cdots\sum_{k_d=1}^{m_d} d_F(n,(k_1,\dots,k_d))E_{k_1}(x_1;n)\dots E_{k_d}(x_d;n) \right|\\
&\le\left|\sum_{k_1=m_1+1}^\infty\cdots\sum_{k_d=m_d+1}^\infty d_F(n,(k_1,\dots,k_d))\right|\\
&\quad\quad+\left|
\left(1-\frac{1}{F(m/n)}\right) \sum_{k_1=1}^{m_1}\cdots\sum_{k_d=1}^{m_d} d_F(n,(k_1,\dots,k_d)) \right|\\
&=\left|\sum_{k_1=m_1+1}^\infty\cdots\sum_{k_d=m_d+1}^\infty d_F(n,(k_1,\dots,k_d))\right|+\left|
1-F(m/n)\right|\\
&\le2\left|
1-F(m/n)\right|,
\end{align*}
which is independent in $x$, and goes to zero as all entries of $m$ go to infinity.
\end{proof}

In particular we obtain the following limit.
\begin{corollary}
Let $F$ be a distribution concentrated on $\mathbb{R}_+^d$. Then
\begin{align*}
\lim_{n\to\infty}\lim_{m\to\infty}F_{m,n}(x)=F(x),\quad \forall x\in\mathbb{R}^d.
\end{align*}
\end{corollary}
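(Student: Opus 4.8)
The plan is to combine the two preceding results by a straightforward iterated-limit argument. Fix $x\in\mathbb{R}_+^d$ (the statement is for all $x$, but note the inner limit already produces $F_n(x)$ at every $x$, and the outer limit $F_n(x)\to F(x)$ holds at continuity points; I would either restrict the claim implicitly to continuity points or invoke that a distribution has at most countably many discontinuity hyperplanes — in any case I would state precisely which version is meant). The previous theorem gives $\lim_{m\to\infty} F_{m,n}(x) = F_n(x)$ for each fixed $n$, indeed uniformly in $x$; the theorem before that gives $\lim_{n\to\infty} F_n(x) = F(x)$ at continuity points of $F$. Chaining these two yields exactly
\begin{align*}
\lim_{n\to\infty}\lim_{m\to\infty} F_{m,n}(x) = \lim_{n\to\infty} F_n(x) = F(x).
\end{align*}

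First I would recall from the Approximation-by-finite-mixtures theorem that $\sup_{x}|F_n(x) - F_{m,n}(x)| \le 2|1 - F(m/n)| \to 0$ as $m\to\infty$, so the inner limit exists and equals $F_n(x)$. Then I would substitute this into the outer limit and invoke the Denseness-of-infinite-mixtures theorem, which asserts $F_n(x)\to F(x)$ pointwise at continuity points. Since the inner limit has already collapsed the double-indexed family to the single sequence $\{F_n\}$, no delicate interchange of limits is needed: the iterated limit is literally the composition of two limits taken in a fixed order, each of which is supplied by an earlier result.

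The one point requiring a word of care — and the only place where something could go wrong — is the role of continuity points. The outer convergence $F_n\to F$ is only guaranteed where $F$ is continuous, so the blanket quantifier ``$\forall x\in\mathbb{R}^d$'' in the statement should be read with that caveat, or else justified by noting that a $d$-dimensional distribution function is continuous off a set of Lebesgue measure zero and that weak convergence is characterised by convergence at continuity points (as recalled just before the denseness theorem). I would add a single sentence to this effect rather than a separate argument. Apart from that, the proof is a two-line concatenation, so I expect no real obstacle: the substance was already done in the two theorems being cited, and this corollary merely packages them.
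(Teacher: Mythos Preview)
Your proposal is correct and matches the paper's approach: the corollary is stated there without a separate proof, being regarded as an immediate concatenation of the two preceding theorems, exactly as you outline. Your observation about the continuity-point caveat in the quantifier ``$\forall x\in\mathbb{R}^d$'' is a valid point of precision that the paper glosses over.
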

{The above now yields the following main result, noting that the class of mPH distributions is closed under mixtures, and that multivariate Erlang distributions with independent marginals fall trivially into the mPH class.}

\begin{theorem}[Denseness of the mPH class]
Let $X$ be a random vector in $\mathbb{R}_+^d$. Then there exist a sequence of random vectors $\{X_n\}_{n\in\mathbb{N}}$ in $\mathbb{R}_+^d$, where each $X_n$ has a multivariate phase-type distribution, such that
$$X_n\stackrel{d}{\to}X,\quad n\to\infty.$$
In particular, the $X_n$ may be taken as finite mixtures of $d$-dimensional random vectors with independent Erlang components.
\end{theorem}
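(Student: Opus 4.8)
The plan is to reduce the statement to a single structural fact — that every finite mixture of product distributions with independent Erlang marginals is mPH — after which the conclusion follows from the Erlang approximation already established. First I would assemble one approximating sequence. By the theorem on approximation by finite mixtures, for each $n$ we have $\sup_{x\in\R_+^d}|F_n(x)-F_{m,n}(x)|\to 0$ as $m\to\infty$, so we may choose $m=m_n\in\N^d$ (with all coordinates at least $n$, say) such that $\sup_{x}|F_n(x)-F_{m_n,n}(x)|\le 1/n$. Since, by the theorem on denseness of infinite mixtures, $F_n(x)\to F(x)$ at every continuity point $x$ of $F$, the triangle inequality yields $F_{m_n,n}(x)\to F(x)$ at every such $x$. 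Each $F_{m_n,n}$ is a genuine distribution function — the normalisation by $F(m_n/n)$ makes its mixture weights sum to one — so, letting $X_n$ have law $G_n:=F_{m_n,n}$, we obtain $X_n\stackrel{d}{\to}X$, and it remains only to exhibit each $G_n$ as an mPH distribution.

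\emph{The mPH representation.} Fix $n$ and write $w_{\vect k}=d_F(n,\vect k)/F(m_n/n)$ for the weight that $G_n$ gives to a multi-index $\vect k=(k_1,\dots,k_d)$ ranging over the finite set $\prod_{i=1}^d\{1,\dots,(m_n)_i\}$; conditionally on $\vect k$, the $i$-th coordinate is $\mathrm{Erlang}(k_i,n)$, independently across $i$. I would build a common state space consisting of one \emph{entry} state per admissible $\vect k$, together with, for every coordinate $i$ and every $\vect k$, a private chain of $k_i-1$ further states. Let $\bfp$ be supported on the entry states, with mass $w_{\vect k}$ at the entry state of $\vect k$. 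For each $i$, define $\bfT_i$ so that, started from the entry state of $\vect k$, the $i$-th process leaves at rate $n$ into the first state of the chain attached to $(i,\vect k)$, moves along that chain at rate $n$, and is absorbed at rate $n$ from its last state (absorbed directly from the entry state when $k_i=1$); on states unreachable by the $i$-th process from the support of $\bfp$, take $\bfT_i$ to be any valid sub-intensity matrix, since those states carry no initial mass and cannot affect the law. Then, started from the entry state of $\vect k$, the $i$-th absorption time is a sum of $k_i$ independent $\mathrm{Exp}(n)$ variables, i.e.\ $\mathrm{Erlang}(k_i,n)$, and the conditional-independence clause in \eqref{dependence_def} makes the $d$ absorption times independent given $J_0$. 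Hence the mPH vector with parameters $(\bfp,\{\bfT_1,\dots,\bfT_d\})$ has exactly the law $G_n$, which also proves the ``in particular'' clause.

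\emph{The main obstacle.} The genuinely new content is the construction above; the rest is a repackaging of the Erlang approximation plus a diagonalisation. The one point needing care is that the mPH definition forces all $d$ processes onto a single shared state space and a single shared initial state, which at first sight looks like it should couple the marginals too rigidly; the remedy is to enlarge the shared state space enough to host one private Erlang chain for each (coordinate, mixture-component) pair, after which the independence built into \eqref{dependence_def} supplies the product structure automatically and the mixture is recovered simply by conditioning on the entry state.
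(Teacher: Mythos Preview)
Your proposal is correct and follows the same route as the paper: diagonalise the finite Erlang-mixture approximations $F_{m,n}$ against the infinite-mixture approximations $F_n$, then identify the resulting finite mixtures as mPH distributions. The paper actually leaves the mPH representation of the finite Erlang mixtures entirely implicit, so your explicit construction of the shared state space (one entry state per multi-index, with private Erlang chains per coordinate) fills a genuine gap in the exposition and is correct as stated.
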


{ It should be mentioned that although the mPH class is dense on the class of positively-supported distributions, for convergence to be achieved, we require the number of components to be allowed to diverge. That is, for a fixed and finite dimension, denseness does not hold.}

Another dense subclass are Markovian Arrival Processes (MAP's). However, the mPH class is contained in the latter and thus is the smallest (with respect to set inclusion) known dense subclass of the MPH$^\ast$ distributions.

\begin{conjecture}
The mPH class is the smallest possible subclass of the MPH$^\ast$ distributions with general PH margins which is dense on the set of distributions supported on the positive orthant.
\end{conjecture}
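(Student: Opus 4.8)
Before a proof can be attempted the word ``smallest'' must be made precise, since read literally the statement fails: mPH has no isolated points in the weak topology, so deleting a single distribution from it produces a strictly smaller dense family. The plan is therefore to adopt the minimality reading, admitting as a competitor only a family $\mathcal C\subseteq\mathrm{MPH}^\ast$ which is (i) a genuine model class, i.e.\ closed under permutation and marginalization of the components and under coordinatewise scaling $X\mapsto\mathrm{diag}(c)X$, $c>0$; (ii) endowed with \emph{general} PH margins, meaning that the set of laws arising as a univariate margin of some member of $\mathcal C$ is the whole PH class; and (iii) weakly dense on $\mathbb R_+^d$. The target is then the two-sided statement that mPH satisfies (i)--(iii) and that no proper subclass of mPH does. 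The first half is already available: (i) is immediate from the construction, (ii) holds because $X_i\sim\mathrm{PH}(\bfp,\bfT_i)$ exhausts PH as $(\bfp,\bfT_i)$ varies, and (iii) is the Denseness of the mPH class proved above.

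For the forcing direction I would first isolate a structural characterization of mPH inside $\mathrm{MPH}^\ast$, using the reward representation of Section~\ref{sec:mPH}: a member of $\mathrm{MPH}^\ast$ should be shown to lie in mPH exactly when it admits a representation whose reward matrix has $0$/$1$ entries with pairwise disjoint column supports and for which the $d$ accumulated rewards are conditionally independent given the initial state --- this is the precise sense in which mPH is the sub-construction of $\mathrm{MPH}^\ast$ that generates dependence \emph{only} through the shared starting state. Second, given a competitor $\mathcal C\subseteq\mathrm{mPH}$ satisfying (i)--(iii), one argues that $\mathcal C=\mathrm{mPH}$: fix $\mu=\mathrm{mPH}(\bfp,\mathcal T)$; by (ii) the class $\mathcal C$ realizes each prescribed margin $\mathrm{PH}(\bfp,\bfT_i)$, and by (iii) it realizes every dependence structure attainable under those margins; the remaining point is to prove that, within mPH, matching the $d$ margins and spanning all attainable dependences already forces the parameter $(\bfp,\mathcal T)$ to range over its entire admissible set, whence $\mu\in\mathcal C$.

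The main obstacle is the second step, and it is the reason the statement is posed as a conjecture. One has to exclude ``economical'' dense subclasses that reproduce the same margins with fewer degrees of freedom than the full $(\bfp,\{\bfT_i\})$ allows --- for example by tying the $\bfT_i$ together, or by bounding the order of the shared state space --- and I expect this genuinely requires a quantitative lower bound: that approximating a prescribed target within $\varepsilon$ in, say, a weak or Wasserstein metric needs a shared state space of at least a certain size, together with a matching upper bound showing mPH already attains it. Without such a bound there is no reason the family of dense subclasses of $\mathrm{MPH}^\ast$ is linearly ordered, so ``the smallest'' is not even well posed; and to upgrade ``smallest known'' to ``smallest possible'' one must in addition rule out incomparable constructions such as Markovian arrival processes and infinite-order limits, for which only the one-sided inclusion $\mathrm{mPH}\subseteq\mathrm{MAP}$ is on record. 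A realistic first milestone, which I would pursue before the full conjecture, is the weaker assertion that mPH is \emph{minimal} --- that it possesses no proper subclass satisfying (i)--(iii).
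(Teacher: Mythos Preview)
The paper does not prove this statement: it is explicitly labelled a \emph{Conjecture} and is left open. The only supporting remark in the text is that Markovian Arrival Processes form another dense subclass, that $\mathrm{mPH}\subseteq\mathrm{MAP}$, and hence that $\mathrm{mPH}$ is the smallest \emph{known} dense subclass with respect to set inclusion. There is therefore no ``paper's own proof'' to compare your proposal against.

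Your write-up is appropriately framed as a discussion rather than a proof, and you correctly identify the two substantive obstacles the paper leaves unaddressed: first, that ``smallest'' is ill-posed without a precise notion of admissible subclass (your conditions (i)--(iii) are one reasonable formalization, though the paper does not commit to any such axioms); second, that even under such a formalization one would need a quantitative lower bound on the state-space size required to approximate a given target, together with a proof that the family of dense subclasses is suitably ordered so that a minimum exists. Your observation that deleting a single distribution already yields a strictly smaller dense family shows why the naive reading fails. All of this is sound commentary, but it does not constitute a proof, and you should be clear that the paper does not claim one either: upgrading ``smallest known'' to ``smallest possible'' is precisely the content of the conjecture.
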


\section{Estimation}\label{sec:est}
We now derive a fully-explicit EM algorithm for the ML estimation of the mPH class. In contrast to the estimation methods for the MPH$^\ast$ class, the formulas are not based on projections onto univariate domains, but instead we directly compute the conditional expectations on the multivariate domain.

Let $F\sim\mbox{mPH}( \bfp , \mathcal{T} ).$ Assume we have a sample $x^{(1)},\dots,x^{(n)}\sim F$ with associated latent sample paths $\{J_t^{(i,m)}\}_{t\ge0}$, $i=1,\dots,d$, $m=1,\dots,n$. We make the following definitions. 
\begin{align*}
B_k&=\sum_{i=1}^d\sum_{m=1}^n 1\{J_0^{(i,m)}=k\},\quad k=1,\dots,p,\\
N^{(i)}_{ks}&=\sum_{m=1}^n \sum_{t\ge0}1\{J_{t-}^{(i,m)}=k,J_{t}^{(i,m)}=s\},\quad k,s=1,\dots,p,\:\: i=1,\dots,d,\\
N^{(i)}_{k}&=\sum_{m=1}^n \sum_{t\ge0}1\{J_{t-}^{(i,m)}=k,J_{t}^{(i,m)}=p+1\},\quad k=1,\dots,p,\:\: i=1,\dots,d,\\
Z_k^{(i)}&=\sum_{m=1}^n \int_0^\infty1\{J_t^{(i,m)} =k\}dt, \quad k=1,\dots,p,\:\: i=1,\dots,d.
\end{align*}
These statistics are not observable, but help with constructing an effective EM algorithm. Notice also their interpretation in terms of the dynamics of the underlying Markov process. Denote by $\Xi$ (respectively $\xi$) the observed information of the sample at the population level (respectively, observation level).
Then, the completely observed likelihood can be written as follows:
\begin{equation}
\mathcal{L}_c( \bfp , \mathcal{T};\xi)=
\prod_{k=1}^{p} {\pi_k}^{B_k} \prod_{i=1}^d\prod_{k=1}^{p}\prod_{s\neq k} {t^{(i)}_{ks}}^{N^{(i)}_{ks}}e^{-t^{(i)}_{ks}Z^{(i)}_k}\prod_{k=1}^{p}{t^{(i)}_k}^{N^{(i)}_k}e^{-t^{(i)}_{k}Z^{(i)}_k},   
\end{equation}

which is seen to conveniently fall into the exponential family of distributions, and thus has explicit maximum likelihood estimators.

The E-step consists on calculating the expected value of the log likelihood, given the sample, while the M-step maximises the likelihood given the conditional expectations in place of the actual statistics. The latter step is straightforward and thus the details are omitted. We thus only derive the former, which depends on the joint distribution. We get by Bayes' formula
\begin{align*}
 \mathbb{E}(B_k\mid \Xi=\xi)
&=\sum_{i=1}^d\sum_{m=1}^n \mathbb{E}(1\{J_0^{(i,m)}=k\}\mid \Xi=\xi)\\
&=d\sum_{m=1}^n \P(J_0^{(m)}=k\mid \Xi=\xi)\\
&=d\sum_{m=1}^n \frac{\P( J_0^{(m)}=k)\P(X_1\in dx_1^{(m)},\dots, X_d\in dx_d^{(m)}\mid J_0^{(m)}=k)}{\P(X_1\in dx_1^{(m)},\dots, X_d\in dx_d^{(m)})}\\
&=d\sum_{m=1}^n \frac{\pi_k \prod_{i=1}^d {\bfe_k}^{ \mathsf{T}}\exp( \bfT_i x^{(m)}_i) \bft_i }{\sum_{j=1}^p \pi_j \prod_{i=1}^d {\bfe_k}^{ \mathsf{T}}\exp( \bfT_i x^{(m)}_i) \bft_i},
\end{align*}
\begin{align*}
&\mathbb{E}(Z_k^{(i)}\mid \Xi=\xi)\\
&=\sum_{m=1}^n\E(\int_0^{x_i^{(m)}}1\{J_t^{(i,m)} =k\}dt\mid \Xi=\xi)\\
&=\sum_{m=1}^n\int_0^{x_i^{(m)}}\P(J_t^{(i,m)} =k\mid \Xi=\xi)dt\\
&=\sum_{m=1}^n\int_0^{x_i^{(m)}}\frac{\P( J_t^{(i,m)} =k)\P(X_1\in dx_1^{(m)},\dots, X_d\in dx_d^{(m)}\mid J_t^{(i,m)} =k)}{\P(X_1\in dx_1^{(m)},\dots, X_d\in dx_d^{(m)})}dt\\
&=\sum_{m=1}^n\int_0^{x_i^{(m)}}\frac{
\sum_{j=1}^p \pi_j 
{\bfe_k}^{\mathsf{T}}
\exp(\bfT_i(x^{(m)}_i-t))\bft_i
\prod_{l\neq i}{\bfe_j}^{ \mathsf{T}}\exp( \bfT_l x^{(m)}_l) \bft_l
{\bfe_j}^{\mathsf{T}}\exp(\bfT_i t)\bfe_k
 }{\sum_{j=1}^p \pi_j \prod_{i=1}^d {\bfe_j}^{ \mathsf{T}}\exp( \bfT_i x^{(m)}_i) \bft_i}dt\\
 &=\sum_{m=1}^n \frac{\sum_{j=1}^p\pi_j
 \prod_{l\neq i}{\bfe_j}^{ \mathsf{T}}\exp( \bfT_l x^{(m)}_l) \bft_l
 }{\sum_{j=1}^p \pi_j \prod_{i=1}^d {\bfe_j}^{ \mathsf{T}}\exp( \bfT_i x^{(m)}_i) \bft_i}
 \int_0^{x_i^{(m)}} {\bfe_k}^{\mathsf{T}}
\exp(\bfT_i(x^{(m)}_i-t))\bft_i
{\bfe_j}^{\mathsf{T}}\exp(\bfT_i t)\bfe_k
dt.
\end{align*}
Further,
{\tiny
\begin{align*}
&\mathbb{E}(N_{ks}^{(i)}\mid \Xi=\xi)\\
&=\sum_{m=1}^n \frac{\E(\sum_{t\ge0}1\{J_{t-}^{(i,m)}=k,J_{t}^{(i,m)}=s\},X_1\in dx_1^{(m)},\dots, X_d\in dx_d^{(m)})}{\P(X_1\in dx_1^{(m)},\dots, X_d\in dx_d^{(m)})}\\
&=\sum_{m=1}^n\sum_{j=1}^p\sum_{l=1}^p \pi_k\frac{\E(\sum_{t\ge0}1\{J_{t-}^{(i,m)}=k,J_{t}^{(i,m)}=s\}1\{X_1\in dx_1^{(m)},\dots, X_d\in dx_d^{(m)}\} 1\{J^{(i,m)}_{x_i^{(m)}-}=l\}t^{(i)}_l\mid J_0^{(m)}=j)}{\sum_{j=1}^p \pi_j \prod_{i=1}^d {\bfe_j}^{ \mathsf{T}}\exp( \bfT_i x^{(m)}_i) \bft_i}\\
&=
\sum_{m=1}^n \frac{\sum_{j=1}^p\pi_j
 \prod_{l\neq i}{\bfe_j}^{ \mathsf{T}}\exp( \bfT_l x^{(m)}_l) \bft_l
 }{\sum_{j=1}^p \pi_j \prod_{i=1}^d {\bfe_j}^{ \mathsf{T}}\exp( \bfT_i x^{(m)}_i) \bft_i}
\sum_{l=1}^p 
\E(\sum_{t\ge0}1\{J_{t-}^{(i,m)}=k,J_{t}^{(i,m)}=s\}1\{X_i\in dx_i^{(m)}\} 1\{J^{(i,m)}_{x_i^{(m)}-}=j\}\mid J_0^{(m)}=j)t^{(i)}_l\\
&=
\sum_{m=1}^n \frac{\sum_{j=1}^p\pi_j
 \prod_{l\neq i}{\bfe_j}^{ \mathsf{T}}\exp( \bfT_l x^{(m)}_l) \bft_l
 }{\sum_{j=1}^p \pi_j \prod_{i=1}^d {\bfe_j}^{ \mathsf{T}}\exp( \bfT_i x^{(m)}_i) \bft_i}
\sum_{l=1}^p 
t_{ks}^{(i)} \int_0^{x_i^{(m)}} \bfe_s^{\mathsf{T}}\exp(\bfT_i(x_i^{(m)}-t))\bfe_l \bfe_j^{\mathsf{T}}\exp(\bfT_i t)\bfe_k dt
\,t^{(i)}_l,
\end{align*}
}
so that summing up, we obtain
\begin{align*}
&\mathbb{E}(N_{ks}^{(i)}\mid \Xi=\xi)\\
&=
\sum_{m=1}^n \frac{\sum_{j=1}^p\pi_j
 \prod_{l\neq i}{\bfe_j}^{ \mathsf{T}}\exp( \bfT_l x^{(m)}_l) \bft_l
 }{\sum_{j=1}^p \pi_j \prod_{i=1}^d {\bfe_j}^{ \mathsf{T}}\exp( \bfT_i x^{(m)}_i) \bft_i} 
t_{ks}^{(i)}\bfe_s^{\mathsf{T}} \left(\int_0^{x_i^{(m)}}\exp(\bfT_i(x_i^{(m)}-t))\bft_i \bfe_j^{\mathsf{T}}\exp(\bfT_i t) dt\right)\bfe_k.
\end{align*}
Finally,
\begin{align*}
\mathbb{E}(N_{k}^{(i)}\mid \Xi=\xi)
&=\sum_{m=1}^n \E(\sum_{t\ge0}1\{J_{t-}^{(i,m)}=k,J_{t}^{(i,m)}=p+1\}\mid \Xi=\xi)\\
&=\sum_{m=1}^n \P(J^{(i)}_{x^{(m)}_i -}=k\mid \Xi=\xi)\\
&=\sum_{m=1}^n \frac{\P(X_1\in dx_1^{(m)},\dots, X_d\in dx_d^{(m)} \mid J^{(i,m)}_{x^{(m)}_i -}=k)\P(J^{(i,m)}_{x^{(m)}_i -}=k)}{\P(X_1\in dx_1^{(m)},\dots, X_d\in dx_d^{(m)})}\\
&=\sum_{m=1}^n t_k^{(i)}\frac{\P(X_1\in dx_1^{(m)},\dots, X_d\in dx_d^{(m)} \mid J^{(i,m)}_{x^{(m)}_i -}=k)}{\sum_{j=1}^p \pi_j \prod_{i=1}^d {\bfe_j}^{ \mathsf{T}}\exp( \bfT_i x^{(m)}_i) \bft_i}\\
&=\sum_{m=1}^n\sum_{j=1}^p\pi_j t_k^{(i)}\frac{
{\bfe_j}^{ \mathsf{T}}\exp( \bfT_i x^{(m)}_i) \bfe_k
\prod_{l\neq i}{\bfe_j}^{ \mathsf{T}}\exp( \bfT_l x^{(m)}_l) \bft_l 
}{\sum_{j=1}^p \pi_j \prod_{i=1}^d {\bfe_j}^{ \mathsf{T}}\exp( \bfT_i x^{(m)}_i) \bft_i}.
\end{align*}
It is not hard to verify that for $d=1$, all these formulas indeed reduce to the corresponding univariate phase-type variants, found in \cite{asmussen1996fitting}.


For simplicity, we provide the full procedure in Algorithm \ref{alg;mult_ph}, where we avoid redundant matrix exponential evaluations -- which are computationally costly. Notice that integrals of matrix exponentials do not require numerical evaluation, but can be circumvented using the results of \cite{van1978computing}, which in particular gives the identity
	\begin{align*}
		\exp \left( \left(  \begin{array}{cc}
		\bfT & \bft \, \bfp \\
		\0 & \bfT
	\end{array}  \right) y \right)=  \left(  \begin{array}{cc}
		\ex^{\bfT y } & \int_{0}^{y} \ex^{ \bfT (y-u)} \bft \bfp \ex^{ \bfT u}du \\
		\0 & \ex^{\bfT y } 
	\end{array}  \right)\,.
	\end{align*}
	This implies that integrals of the form  $$\int_{0}^{y} \ex^{ \bfT (y-u)} \bft \bfp \ex^{ \bfT u}du,$$ may be obtained as a sub-matrix of a single matrix exponential evaluation.

{\small
\begin{algorithm}[]
\caption{EM algorithm for mPH distributions}\label{alg;mult_ph}
\begin{algorithmic}
\State \textit{\textbf{Input}: a sample of size $n$, $x=({x^{(1)}}^{\mathsf{T}},\dots, {x^{(n)}}^{\mathsf{T}})$, where each $x^{(m)}\in\mathbb{R}_+^d.$}\\
\begin{enumerate} 
\item[ 1)]\textit{Matrix exponentials:} Compute for each sample point (and thus their super-index is omitted in this step):
\begin{align*}
a_{kij}&=\bfe_k^{\mathsf{T}}\exp(\bfT_i x_i)\bfe_j, \quad k,j=1,\dots,p,\:\: i=1,\dots,d,\\
a_{ki}&=\sum_{j} t^{(i)}_j a_{kij}, \quad k=1,\dots,p,\:\: i=1,\dots,d,\\
a_{k,-i}&=\prod_{j\neq i} a_{kj},\quad k=1,\dots,p,\:\:i=1,\dots,d,\\
a_k&=\prod_i a_{ki},\quad k=1,\dots,p,\\
\tilde a_{k,i} &=\sum_j\pi_j a_{j,-i}a_{jik},\quad k=1,\dots,p,\:\: i=1,\dots, d,\\
a&=\sum_k \pi_k a_k\\
b_{skij}&={\bfe_s}^{\mathsf{T}}\int_0^{x_i}
\exp(\bfT_i(x_i-t))\bft_i
{\bfe_j}^{\mathsf{T}}\exp(\bfT_i t)
dt\, \bfe_k,\quad s,k,j=1,\dots ,p,\:\: i=1,\dots , d,\\
b_{ski}&=\sum_j \pi_j (a_{j,-i}b_{skij}),\quad s,k=1,\dots ,p,\:\: i=1,\dots , d,
\end{align*}

\item[ 2)]\textit{E-step:} compute the conditional expectations
\begin{align*}
     \mathbb{E}(B_k\mid \Xi=\xi)&=d\pi_k \sum^n(a_k/a), \quad k=1,\dots,p\\
     \mathbb{E}(Z_k^{(i)}\mid \Xi=\xi)&=  \sum^n( b_{kki}/a), \quad k=1,\dots,p,\:\:i =1,\dots,d,\\
     \mathbb{E}(N_{ks}^{(i)}\mid \Xi=\xi)&=t^{(i)}_{ks}\sum^n( b_{ski}/a),\quad \quad s,k=1,\dots ,p,\:\: i=1,\dots , d,\\
     \mathbb{E}(N_{k}^{(i)}\mid \Xi=\xi)&=t_k^{(i)}\sum^n(\tilde a_{ki}/a),\quad k=1,\dots ,p,\:\: i=1,\dots , d
\end{align*}

\item[3)] \textit{M-step:} let 
\begin{align*}
\hat \pi_k&=\frac{\mathbb{E}(B_k\mid \Xi=\xi)}{d\cdot n},\quad k=1,\dots,p,\\
\widehat{t_{ks}^{(i)}}&=\frac{\mathbb{E}(N^{(i)}_{ks}\mid\Xi=\xi)}{\mathbb{E}(Z^{(i)}_{k}\mid \Xi=\xi)}, \quad s,k=1,\dots,p,\:\: i=1,\dots,d.\\
\widehat{t_{k}^{(i)}}&=\frac{\mathbb{E}(N^{(i)}_{k}\mid \Xi=\xi)}{\mathbb{E}(Z^{(i)}_{k}\mid \Xi=\xi)}, \quad k=1,\dots,p,\:\: i=1,\dots,d.\\
\widehat{t_{kk}^{(i)}}&=-\sum_{s\neq k} \widehat {t^{(i)}_{ks}}-\widehat{t_k^{(i)}}, \quad k=1,\dots,p,\:\: i=1,\dots,d.
\end{align*}

\item[4)] Update the current parameters to $({\bfp},\mathcal{T}) =(\hat{{\bfp}},\hat{\mathcal{T}})$. Return to step 1 unless a stopping rule is satisfied.
\end{enumerate}
    \State \textit{\textbf{Output}: fitted representation $(\hat{{\bfp}},\hat{\mathcal{T}})$.}
\end{algorithmic}
\end{algorithm}
}

%

Standard arguments for the EM algorithm for exponential families yields the following result, which despite not guaranteeing convergenge to a global optimum, it does guarantee monotonicity and thus eventual convergence.

\begin{proposition}[Convergence]
The likelihood function is increasing at each iteration of the EM algorithm for mPH distributions. In particular, convergence is guaranteed to a possibly local maximum.
\end{proposition}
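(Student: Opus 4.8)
The plan is to invoke the standard EM-monotonicity machinery, adapted to the fact that the complete-data model here is a curved/regular exponential family, so that the M-step has an explicit maximizer. First I would set up the usual decomposition: for observed data $\xi$ with incomplete likelihood $\mathcal L(\bfp,\mathcal T;\xi)$ and complete-data likelihood $\mathcal L_c(\bfp,\mathcal T;\xi)$, write
\begin{align*}
\log \mathcal L(\bfp,\mathcal T;\xi)=\E_{(\bfp_0,\mathcal T_0)}\big(\log \mathcal L_c(\bfp,\mathcal T;\xi)\mid \Xi=\xi\big)-\E_{(\bfp_0,\mathcal T_0)}\big(\log \mathcal L_c(\bfp,\mathcal T;\xi)-\log \mathcal L(\bfp,\mathcal T;\xi)\mid \Xi=\xi\big),
\end{align*}
where the second term is, up to sign, the conditional entropy of the latent paths given $\Xi=\xi$. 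Denoting the first term $Q((\bfp,\mathcal T)\mid(\bfp_0,\mathcal T_0))$ and the second $H((\bfp,\mathcal T)\mid(\bfp_0,\mathcal T_0))$, Gibbs' inequality (nonnegativity of Kullback--Leibler divergence) gives $H((\bfp,\mathcal T)\mid(\bfp_0,\mathcal T_0))\le H((\bfp_0,\mathcal T_0)\mid(\bfp_0,\mathcal T_0))$ for all parameter values.

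Next I would identify the update rule of Algorithm \ref{alg;mult_ph} as the maximizer of $Q$. Because $\mathcal L_c$ is a product of powers and exponentials of the statistics $B_k$, $N_{ks}^{(i)}$, $N_k^{(i)}$, $Z_k^{(i)}$, it lies in an exponential family; taking conditional expectations is linear, so $Q((\bfp,\mathcal T)\mid(\bfp_0,\mathcal T_0))$ has exactly the same functional form as $\log\mathcal L_c$ but with the statistics replaced by their conditional expectations $\E(B_k\mid\Xi=\xi)$, etc. Maximizing this expression over $(\bfp,\mathcal T)$ subject to $\sum_k\pi_k=1$ and the intensity-matrix constraints (via a Lagrange multiplier for $\bfp$, and pointwise optimization in each $t^{(i)}_{ks}$, $t^{(i)}_k$) yields precisely the closed-form estimators $\hat\pi_k$, $\widehat{t^{(i)}_{ks}}$, $\widehat{t^{(i)}_k}$, $\widehat{t^{(i)}_{kk}}$ of the M-step — this is the routine calculation that the text says is omitted. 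Hence the new iterate $(\hat\bfp,\hat{\mathcal T})$ satisfies $Q((\hat\bfp,\hat{\mathcal T})\mid(\bfp_0,\mathcal T_0))\ge Q((\bfp_0,\mathcal T_0)\mid(\bfp_0,\mathcal T_0))$.

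Combining the two displays: $\log\mathcal L(\hat\bfp,\hat{\mathcal T};\xi)-\log\mathcal L(\bfp_0,\mathcal T_0;\xi)=\big(Q((\hat\bfp,\hat{\mathcal T})\mid\cdot)-Q((\bfp_0,\mathcal T_0)\mid\cdot)\big)+\big(H((\bfp_0,\mathcal T_0)\mid\cdot)-H((\hat\bfp,\hat{\mathcal T})\mid\cdot)\big)\ge 0$, with both bracketed terms nonnegative, which is the asserted monotonicity. Since the log-likelihood is bounded above on the relevant parameter set (for a fixed sample with positive density it is finite, and one can argue it does not escape to $+\infty$), a bounded monotone sequence converges, giving the stated "eventual convergence"; a fixed point of the EM map is a stationary point of $\mathcal L$, hence generically a local maximum. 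The main obstacle, such as it is, is not the inequality chain — that is textbook — but verifying cleanly that the M-step formulas in the algorithm really are the unique maximizers of $Q$ under the constraints (in particular handling the reparametrization $t^{(i)}_{kk}=-\sum_{s\ne k}t^{(i)}_{ks}-t^{(i)}_k$ and checking the stationary point is a maximum), and noting the mild regularity needed (all relevant conditional expectations finite and positive, density positive at the data points) so that the EM map is well defined throughout the iteration.
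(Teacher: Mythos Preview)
Your proposal is correct and is precisely the ``standard argument for the EM algorithm for exponential families'' that the paper invokes; the paper does not give a proof at all beyond that one-line appeal. You have simply spelled out the usual $Q$--$H$ decomposition, Gibbs' inequality, and the exponential-family M-step optimization that the text leaves implicit, so there is nothing to contrast.
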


\begin{remark}\rm
{
Using the \texttt{fit()} function for \texttt{mph} objects from the \texttt{matrixdist} package\footnote{\texttt{R} package openly available at \url{https://github.com/martinbladt/matrixdist_1.0}.}, most datasets will be correctly estimated within one thousand iterations (absolute tolerance of $10^{-6}$), which for a dataset of e.g. $10^4$ datapoints with $d=2$ will take just a few minutes (depending on computer specifications). Initial parameters are by default randomly generated when initializing an \texttt{mph} object.
}
\end{remark}

\section{An insurance illustration}\label{sec:illustration}
This section illustrates the statistical methodology developed in the previous section on real-life insurance data. The principal aim is to show the feasibility and accuracy of the algorithm. That said, we do not intend to do a systematic comparison against all relevant multivariate models. A full practical guide to how to effectively choose the order $p$ and structure of the underlying processes, as well as their corresponding simplified EM algorithms, is the subject of additional research, which is currently under preparation (where the useful interval-censored case is also studied). { Preliminary results suggest that for models with a unique mode, a small $p$ (smaller than $5$) is often sufficient, with no special structure, while Coxian structures and large $p$ (larger than $10$) perform better when multimodal marginals are present.} Presently, $p$ is chosen by trial and error, and we allow for the most general phase-type structure possible (no zeros in $\bfT$).

The dataset is the Loss-ALAE dataset, consisting of $n=1500$ bivariate observations, the first margin being an insurance loss, and the second one the corresponding allocated loss adjustment
expense. The dataset was considered in \cite{frees1998understanding}, see also Section 7.4 of
\cite{joe2014dependence}.
There are $34$ loss observations are which are right-censored, which we will presently consider as fully observed. Deriving interval-censored versions of the EM algorithm above is the subject of further research.
The loss variable ranges from $10$ to $2.2$MM, with quartiles of
$4,000$, $12,000$ and $35,000$, while the ALAE variable ranges from $15$ to $0.5$MM with quartiles of $2,300$, $5,500$ and $12,600$. Following \cite{joe2014dependence}, we divide all data points by $10,000$, for easier numerical implementation.

{Phase-type marginals are popular for statistical modeling due to their closed-form formulas and denseness properties and are sometimes linked through a copula to obtain dependence. However, by the results of previous sections, we may fit a tractable multivariate phase-type distribution directly.} Thus, we consider fitting a $4$-dimensional mPH via the EM algorithm of the previous section, and for reference, we also consider separately fitting $4$-dimensional univariate PH distributions to each marginal and modeling the dependence structure with a copula (effectively, the IFM method). The order $p=4$ was found by fitting various orders of increasing size until no significant improvement of the likelihood function occurred, and the initial parameters were randomly generated. The results are given in Table \ref{table:loss_alae_fits}. We observe that the performance of the dependence structure implied from the mPH model is advantageous compared to the copulas, given that PH margins are chosen. 
Further evidence of the adequate fit is provided in Figures \ref{scatter_contour} and \ref{histograms}.
Naturally, the IFM method also allows for non-PH margins, but this would not make the dependence structure directly comparable. For extensions of the tail behaviour of the margins, we refer to Section \ref{extensions}. The parameters  are given by
\begin{align*}
\bfp&=(0.408\,,0.441\,,0.135\,,0.016),\\ 
\bfT_1&=\left(\begin{matrix}{}
  -0.381&0.336& 0 & 0 \\
  0 &  -1.797 &0 & 0.005\\
  0.007& 0.014& -0.077 & 0 \\
    0.024& 0& 0 & -0.025\\
\end{matrix}\right),\\ 
\bfT_2&=\left(\begin{matrix}{}
 -1.481&  0.9&  0.043& 0\\
0 &-2.526 & 0.017&  0.004\\
 0.236 & 0.025& -0.417&  0\\
0& 0&  0.085& -0.085\\
\end{matrix}\right).
\end{align*}
{We observe that all states are possible initial states. For the first marginal, the following transitions are possible: $1\to2\to4\to1$, $3\to1$, $3\to2$, which essentially means that the third state is not accessed very often (only if chosen as initial state). Given that the third state has the second largest mean sojourn time, with average $1/0.077\approx 13$, the state is acting like a possible additive shock. Of course, the interpretation is in terms of latent variables, and without further covariates, cannot be linked to a physical process. For the second marginal, similar considerations hold, although there are more jump possibilities. When interested in the joint behaviour one may inspect each possible path. For example, notice that when starting in state four, the marginals behave very similarly, both having a large initial sojourn time and then immediately jumping to a state of comparable and smaller sojourn time, so that a strong positive relation is created for large values.
}

\begin{figure}[!htbp]
\centering
\includegraphics[width=\textwidth]{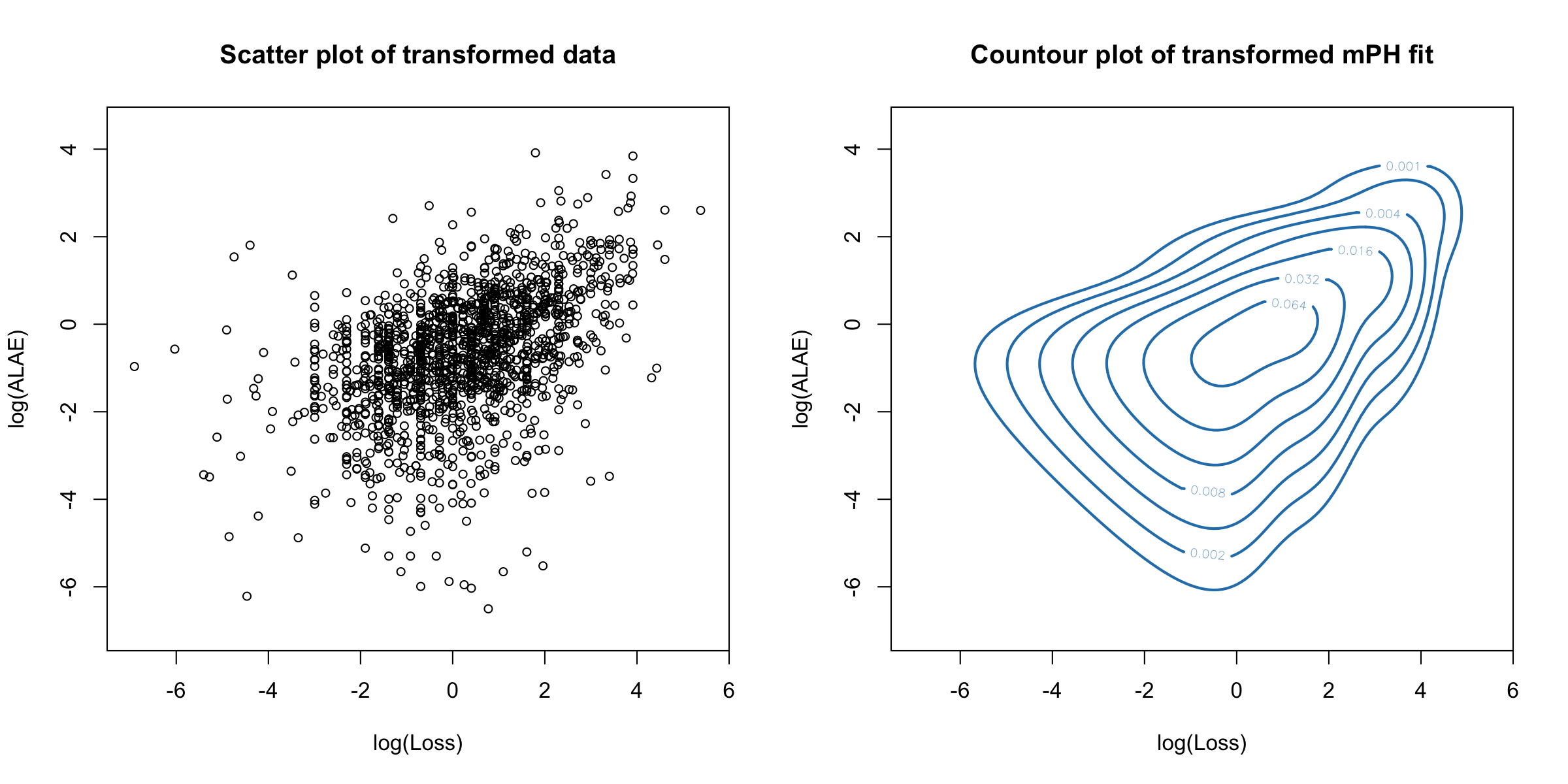}
\caption{Scatter plot of the Loss-ALAE data, together with contour lines of the fitted mPH density. Both are log-transformed in both marginals (after estimation) for visualization purposes.
} \label{scatter_contour}
\end{figure}

\begin{figure}[!htbp]
\centering
\includegraphics[width=\textwidth]{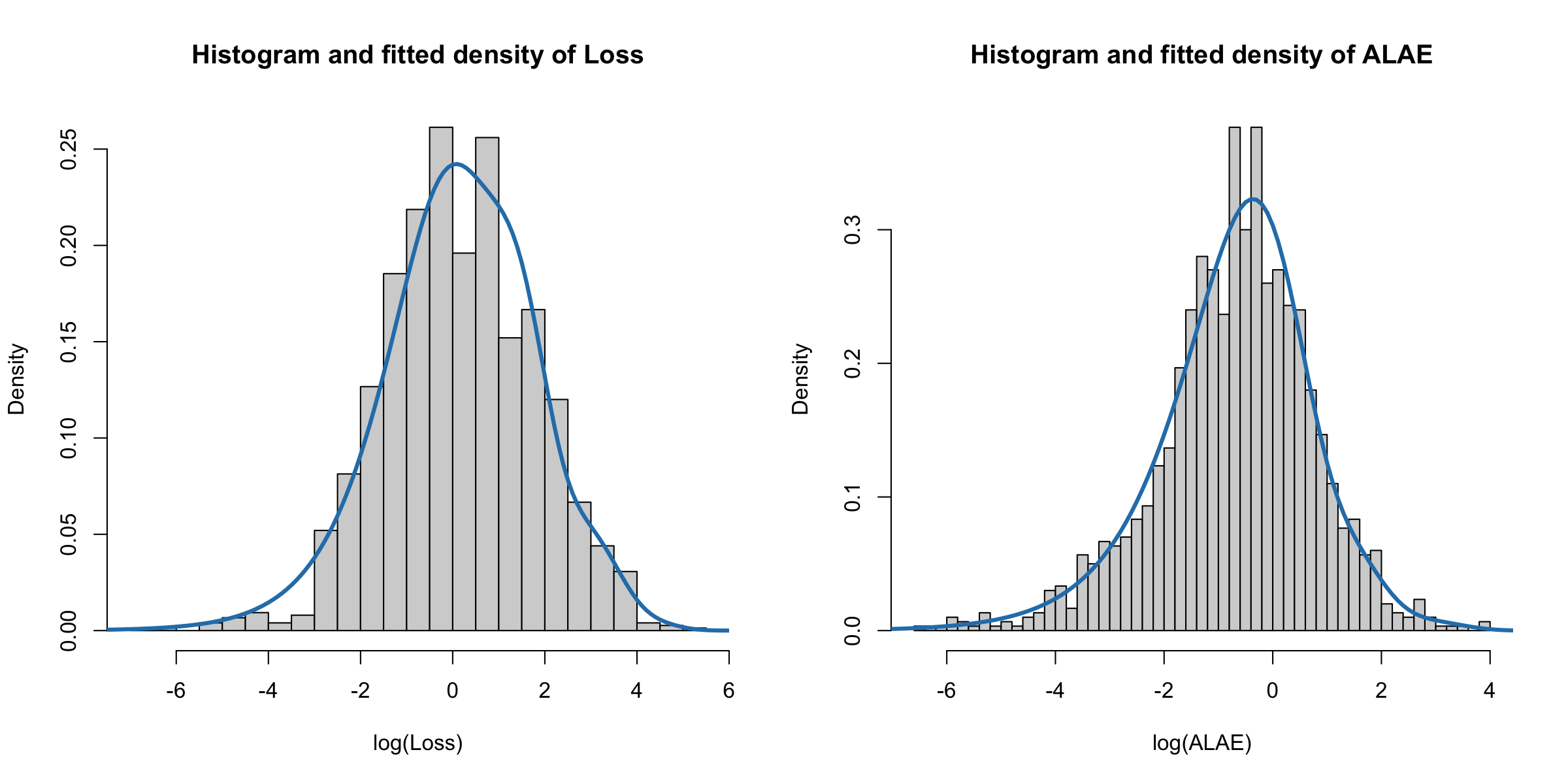}
\caption{Marginal fitted densities for the Loss-ALAE data, together with their respective histograms. Both marginals are log-transformed (after estimation) for visualization purposes.
} \label{histograms}
\end{figure}

\begin{table}[!htbp]
\begin{center}
\begin{tabular}{l c c c c}
\hline
Model& Log Likelihood & Degrees of freedom & AIC & BIC\\
\hline
mPH & $-\textbf{4495.46}$    & $35$   & $9060.921$   & $9246.883$    \\
PH+Gumbel & $-4497.643$     & $39$   & $9073.287$   & $9280.502$    \\
PH+Normal & $-4518.516$     & $39$   & $9115.032$   & $9322.248$    \\
PH+t & $-4510.010$     & $40$   & $9100.020$   & $9312.549$    \\
PH+Frank & $-4530.431$     & $39$   & $9138.862$   & $9346.078$    \\
PH+Clayton & $-4604.792$     & $39$   & $9287.584$   & $9494.800$    \\
PH+Joe & $-4512.921$     & $39$   & $9103.843$   & $9311.058$    \\
PH+Galambos & $-4497.466$     & $39$   & $9072.933$   & $9280.148$    \\
PH+Husler Reiss & $-4502.159$     & $39$   & $9082.319$   & $9289.535$    \\
PH+Tawn & $-4502.549$     & $39$   & $9083.098$   & $9290.314$    \\
\hline
\end{tabular}
\caption{Summary of fitted multivariate models to the Loss-ALAE dataset. All models have general phase-type marginals with $p=4$.}
\label{table:loss_alae_fits}
\end{center}
\end{table}

Finally, we study the empirical versus fitted joint dependence for increasing thresholds. More precisely, we consider joint exceedances over increasing bivariate thresholds. The thresholds for both marginals are chosen according to quantiles of order $\alpha\in(0,1)$. For the fitted counterpart, the correlations for different thresholds are computed from simulations, using $1500$ datapoints, the same number as the sample size. The procedure is the repreated {$5000$} times and averaged. The results are given in Table \ref{table:simstud}, where the tail independence is not yet apparent for the mPH model. Theoretically, for very high thresholds (outside of the data rage), the tail independence will manifest itself.

\begin{table}[!htbp]
\begin{center}
\begin{tabular}{l c c c c c c}
\hline
$\alpha$ & Pearson  & Pearson fitted  & Kendall & Kendall fitted & Spearman & Spearman fitted\\
\hline
$0.1$ & $0.392$ & $0.425$ & $0.292$ & $0.293$ & $0.421$ & $0.426$\\
$0.2$ & $0.379$ & $0.417$ & $0.264$ & $0.277$ & $0.382$ & $0.403$\\
$0.3$ & $0.361$ & $0.410$ & $0.251$ & $0.260$ & $0.363$ & $0.380$\\
$0.4$ & $0.370$ & $0.401$ & $0.237$ & $0.247$ & $0.346$ & $0.362$\\
$0.5$ & $0.353$ & $0.391$ & $0.269$ & $0.242$ & $0.387$ & $0.356$\\
$0.6$ & $0.336$ & $0.377$ & $0.264$ & $0.247$ & $0.383$ & $0.364$\\
$0.7$ & $0.295$ & $0.357$ & $0.248$ & $0.246$ & $0.362$ & $0.360$\\
$0.8$ & $0.235$ & $0.313$ & $0.287$ & $0.200$ & $0.412$ & $0.293$\\
$0.9$ & $0.185$ & $0.270$ & $0.090$ & $0.164$ & $0.135$ & $0.242$\\
\hline
\end{tabular}
\caption{Empirical versus simulated joint exceedance correlations for the Loss-ALAE data.}
\label{table:simstud}
\end{center}
\end{table}

\section{Tail behaviour and extensions}\label{extensions}
Some applications may require tail behaviour in each marginal which is different to that of exponential decay. In particular, life-insurance applications require lighter tails, such as Gompertz-type tails, and third party liability insurance often requires Pareto-type tails. This section outlines two possible solutions, inspired by the univariate case.

\subsection{Inhomogeneous mPH distributions}

Let $ ( J_t^{(k)} )_{t \geq 0}$, $k=1,\dots,d$, be inhomogeneous Mar\-kov pure-jump processes on $\{1, \dots, p, p+1\}$, with states $1,\dots,p$ transient and $p+1$ absorbing. We endow the processes with the same dependence structure as in the homogeneous case, given by \eqref{dependence_def}.

It follows (cf. \cite{albrecher2019inhomogeneous}) that the transition matrix is given by
$$\mat{P}(s,t)=\prod_{s}^{t}(\boldsymbol{I}+\boldsymbol{\Lambda}(u) d u):=\boldsymbol{I}+\sum_{k=1}^{\infty} \int_{s}^{t} \int_{s}^{u_{k}} \cdots \int_{s}^{u_{2}} \mathbf{\Lambda}\left(u_{1}\right) \cdots \mathbf{\Lambda}\left(u_{k}\right) d u_{1} \cdots \mathrm{d} u_{k},$$
with
\begin{align*}
	\mat{\Lambda}(t)= \left( \begin{array}{cc}
		\bfT(t) &  \bft(t) \\
		\0 & 0
	\end{array} \right)\in\mathbb{R}^{(p+1)\times(p+1)}\,, \quad t\geq0\,.
\end{align*}

Then the random variables $X_i = \inf \{ t >  0 : J^{(i)}_t = p+1 \}\,, \: i=1,\dots,d,$ are univariately inhomogeneous phase-type distributed.

\begin{definition}[Inhomogeneous mPH class]
We say that a random vector $X\in\mathbb{R}_+^d$ has a inhomogeneous multivariate phase-type distribution if each component variable $X_i,\:\:i=1,\dots,d$ is the absorption time of distinct inhomogeneous Markov jump process having the structure \eqref{dependence_def} and with $\bfT_i(t)=\lambda_i(t)\bfT_i,$ $i=1,\dots,d.$

\noindent Moreover, we use the notation $$X\sim \mbox{mIPH}(\bfp,\mathcal{T},\mathcal{L}), \quad \mbox{where}\quad \mathcal{T}=\{\bfT_1,\dots,\bfT_d\},\quad \mathcal{L}=\{\lambda_1,\dots,\lambda_d\}.$$
\end{definition}

Writing $$g^{-1}_i(x)=\int_0^x \lambda_i(u)du,\quad i=1,\dots, d,$$
it follows that its density, cumulative distribution function, and tail function are given, for $x\in\mathbb{R}_+^d$, by
\begin{align*}
F_X(x)&=\sum_{j=1}^p\pi_j \prod_{i=1}^d(1-\bfe_j^\mathsf{T}\exp(\bfT_i g^{-1}_i(x_i))\bfe),\\
S_X(x)&=\sum_{j=1}^p\pi_j \prod_{i=1}^d\bfe_j^\mathsf{T}\exp(\bfT_i g^{-1}_i(x_i))\bfe,\\
f_X(x)&=\sum_{j=1}^p\pi_j \prod_{i=1}^d\bfe_j^\mathsf{T}\exp(\bfT_i g^{-1}(x_i))\bft_i\lambda_i(x_i),
\end{align*}
the derivation being analogous to the homogeneous case, and thus omitted. The tail behaviour of each marginal is given by
\begin{align*}
\P(X_i>x_i)&
\sim {c_i [{g_i^{-1}}(x_i)]^{n_i -1} e^{-\chi_2 [{g_i^{-1}}(x_i)]}},\quad i=1,\dots,d,
\end{align*}
where for each $i$, $c_i$ is a positive constant, $-\chi_i$ is the largest real eigenvalue of $\bfT_i$ and $n_i$ is the dimension of the Jordan block associated to $\chi_i$. This can give rise to different tail behaviour for different marginals, which is useful, but uncommon in non-copula specifications.
By construction, it is clear that the marginals are tail independent, and that the copula behaviour is the same as for the mPH class.

\subsection{Fractional mPH distributions}

\subsubsection{A semi-Markov construction}
We first state a univariate construction from \cite{albrecher2019matrix}, which aids us define the multivariate extension. As before, let $E=\{1,2,...,p,p+1\}$ be a state space and denote by
 $\mat{Q}=\{ q_{ij} \}_{i,j\in E}$ the transition matrix of a Markov chain $\{ Y_n \}_{n\in \mathbb{N}}$ on $E$, where the first $p$ states are transient and state $p+1$ is absorbing. In particular we have that the chain $\{ Y_n\}_{n\in\mathbb{N}}$ has a transition matrix given by
\[   \mat{Q} = \begin{pmatrix}
\mat{Q}^1 & \vect{q}^1 \\
\vect{0} & 1 
\end{pmatrix}. \] 
We also assume that $q_{ii}=0$ for all $i\neq p+1$, in order to construct a semi-Markov process. 

Let $\alpha \in (0,1]$ and $\lambda_i>0$. For each state $i=1,...,p$, we define $T^i_n$, $n=1,2,...$ as independent $\mbox{ML}(\alpha,\lambda_i)$--distributed (Mittag-Leffler distributed) random variables, with density given by
\begin{equation}\label{mldens}
  f_{\lambda,\alpha}(x)
=\lambda x^{\alpha-1} E_{\alpha,\alpha}(-\lambda x^\alpha) ,\ \ \ \ \ \lambda>0,\ 0<\alpha\leq 1, \end{equation}
where
\[   E_{\alpha, \beta}(z)=\sum_{k=0}^{\infty} \frac{z^{k}}{\Gamma(\alpha k+\beta)}, \ \ \beta\in\mathbb{R}, \ \alpha >0  \]
is the Mittag--Leffler function (see \cite{gorenflo2014mittag, haubold2011mittag}).

Define $S_0=0$ and
\[ S_n = \sum_{i=1}^n T^{Y_i}_i , \ \ n\geq 1 ,\]
so that the following specification is a semi--Markov process:
\begin{equation}
  J_t = \sum_{n=1}^\infty Y_{n-1} 1\{ S_{n-1}\leq t <S_n  \}  \label{def:X-process_r},\; t\ge 0.
\end{equation}

Several properties of the above construction can be found in \cite{albrecher2019matrix}, including an explicit representation of the transition probabilities in terms of the Mittag-Leffler function, and a fractional version of the Kolmogorov differential equations. Presently we only mention the most relevant property for our purpose:

\begin{theorem}\label{transprobfrac}
Let $\{ J_t\}_{t\geq 0}$ be the semi-Markov process constructed above and let $X=\inf\{t\ge 0:\, J_t= p+1\}$ denote the time until absorption. 
Then $X$ has a PH$_\alpha(\vect{\pi},\mat{T})$ distribution, i.e. has cumulative distribution function given by
$$
F_X(u)=1-\vect{\pi} {E}_{\alpha,1}(\mat{T} u^\alpha) \vect{e} .
$$
\end{theorem}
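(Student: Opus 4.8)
The plan is to identify the semi-Markov absorption time with a time-changed (subordinated) version of a standard homogeneous phase-type absorption time, and then read off the distribution from the known density/transition structure of the standard case. The key observation is that a Mittag-Leffler$(\alpha,\lambda)$ random variable is the absorption time of a subordinated exponential clock: if $\{N_t\}$ is a standard Markov jump process with exponential$(\lambda)$ holding times and $\{E_t\}$ is the inverse of an $\alpha$-stable subordinator (independent of $\{N_t\}$), then $N_{E_t}$ has the same law as the semi-Markov process $\{J_t\}$ built from $\mathrm{ML}(\alpha,\lambda_i)$ holding times and jump matrix $\mat{Q}$. This is precisely the content of the standard correspondence between fractional and classical Markov processes (and is what underlies the references \cite{albrecher2019matrix, gorenflo2014mittag}).

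First I would make this subordination precise: let $\{J^\circ_t\}_{t\ge0}$ be the ordinary (homogeneous) Markov jump process on $\{1,\dots,p,p+1\}$ with sub-intensity matrix $\mat{T}$ chosen so that its embedded chain is $\{Y_n\}$ and its holding rate in state $i$ is $\lambda_i$; that is, $t_{ij}=\lambda_i q_{ij}$ for $i\neq j$ and $t_{ii}=-\lambda_i$, with exit vector $\bft=-\mat{T}\bfe$. Let $X^\circ=\inf\{t\ge0:J^\circ_t=p+1\}\sim\mbox{PH}(\bfp,\mat{T})$ in the usual sense, so $\P(X^\circ>u)=\bfp\exp(\mat{T}u)\bfe$. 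Let $\{E_t\}_{t\ge0}$ be the inverse $\alpha$-stable subordinator, independent of $\{J^\circ_t\}$. Then one checks that $J_t\eqd J^\circ_{E_t}$ as processes, since the waiting time of $J^\circ_{E_\cdot}$ in a state with exponential parameter $\mu$ is the first passage of $\{E_t\}$ above an $\mbox{exp}(\mu)$ level, which is exactly $\mathrm{ML}(\alpha,\mu)$-distributed, and the Markov/independence structure of successive holding times and jumps is preserved.

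Next I would translate the absorption-time statement through the time change. Writing $D_u=\inf\{t\ge0:E_t>u\}$ for the $\alpha$-stable subordinator (the right-inverse of $\{E_t\}$), the event $\{X>u\}=\{J_s\neq p+1\ \forall s\le u\}$ becomes $\{J^\circ_{E_s}\neq p+1\ \forall s\le u\}=\{X^\circ>E_u\}$, hence $\P(X>u)=\P(X^\circ>E_u)=\E\big(\bfp\exp(\mat{T}E_u)\bfe\big)$. It now remains to compute this expectation. Using the known Laplace-transform/density characterization of $E_u$ — equivalently, the identity $\E\,e^{zE_u}=E_{\alpha,1}(zu^\alpha)$ for scalars $z$, extended to matrices via the functional calculus defined in the Notation subsection — I would conclude $\E\exp(\mat{T}E_u)=E_{\alpha,1}(\mat{T}u^\alpha)$, giving $\P(X>u)=\bfp\,E_{\alpha,1}(\mat{T}u^\alpha)\bfe$ and therefore $F_X(u)=1-\bfp\,E_{\alpha,1}(\mat{T}u^\alpha)\bfe$, as claimed.

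The main obstacle is rigorously justifying the process-level identity $J_t\eqd J^\circ_{E_t}$ — in particular that subordinating the ordinary jump process by the inverse stable subordinator reproduces exactly the i.i.d.\ Mittag-Leffler holding times $T^i_n$ together with the embedded chain $\{Y_n\}$, with the correct independence across jumps. The cleanest route is to reduce it to the one-holding-time statement (first passage of an inverse stable subordinator above an independent exponential level is Mittag-Leffler distributed), which is classical, and then invoke the strong Markov property of $\{E_t\}$ at the successive passage times together with the renewal structure of the semi-Markov definition \eqref{def:X-process_r}; alternatively, since this is stated as a cited result, one may simply appeal to \cite{albrecher2019matrix} for the representation and restrict attention to verifying the matrix-functional-calculus step $\E\exp(\mat{T}E_u)=E_{\alpha,1}(\mat{T}u^\alpha)$, which follows by applying the scalar Laplace identity on a contour $\Gamma$ enclosing the eigenvalues of $\mat{T}$ and interchanging expectation with the contour integral (justified by dominated convergence, as $\mathrm{Re}$ of the eigenvalues of $\mat{T}$ are negative and $\|\exp(\mat{T}E_u)\|$ is bounded).
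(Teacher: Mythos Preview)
The paper does not actually prove this theorem. It is stated as a cited result from \cite{albrecher2019matrix}; the surrounding text explicitly says ``Several properties of the above construction can be found in \cite{albrecher2019matrix} \ldots\ Presently we only mention the most relevant property for our purpose,'' and then states the theorem without proof. So there is no paper proof to compare against.

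That said, your subordination argument is the standard route and is essentially how the cited reference establishes the result. One small imprecision: you refer to ``the strong Markov property of $\{E_t\}$ at the successive passage times,'' but the inverse $\alpha$-stable subordinator $\{E_t\}$ is not itself a Markov process in the usual sense. The clean way to regenerate the holding-time structure is to use the strong Markov property of the \emph{subordinator} $\{D_t\}$ at the successive exponential levels $T_1, T_1+T_2,\dots$ of the underlying chain: since $D$ is a L\'evy process and these levels are independent of $D$, the increments $D_{T_1+\cdots+T_k}-D_{T_1+\cdots+T_{k-1}}$ are independent, and each equals in law $D_{T_k}$ with $T_k\sim\exp(\lambda_{Y_k})$, which is $\mathrm{ML}(\alpha,\lambda_{Y_k})$. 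This yields exactly the i.i.d.\ Mittag--Leffler holding times of the semi-Markov construction. With that adjustment, your identification $\{X>u\}=\{X^\circ>E_u\}$ and the functional-calculus step $\E\exp(\mat{T}E_u)=E_{\alpha,1}(\mat{T}u^\alpha)$ go through as written.
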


\subsubsection{The multivariate extension}

Let $ ( J_t^{(k)} )_{t \geq 0}$, $k=1,\dots,d$, be semi-Mar\-kov pure-jump processes as above, all on $\{1, \dots, p, p+1\}$, with states $1,\dots,p$ transient and $p+1$ absorbing. We again endow the processes with the same dependence structure as in the non-fractional case, given by \eqref{dependence_def}.

Then the random variables $X_i = \inf \{ t >  0 : J^{(i)}_t = p+1 \}\,, \: i=1,\dots,d,$ are univariately fractional phase-type distributed.

\begin{definition}[Fractional mPH class]
We say that a random vector $X\in\mathbb{R}_+^d$ has a fractional multivariate phase-type distribution if each component variable $X_i,\:\:i=1,\dots,d$ is the absorption time of a distinct semi-Markov jump process as above, having the dependence structure \eqref{dependence_def}.

\noindent Moreover, we use the notation $$X\sim \mbox{mPH}_\alpha(\bfp,\mathcal{T}), \quad \mbox{where}\quad \mathcal{T}=\{\bfT_1,\dots,\bfT_d\},\quad \alpha (0,1].$$
\end{definition}

The derivation of its density, cumulative distribution function, and tail function are straightforward by combining the non-fractional approach and the formulas in \cite{albrecher2019matrix} for the fractional univariate case, and thus we only state them, for $x\in\mathbb{R}_+^d$:
\begin{align*}
F_X(x)&=\sum_{j=1}^p\pi_j \prod_{i=1}^d(1-\bfe_j^\mathsf{T} {E}_{\alpha,1}(\mat{T} x_i^\alpha) \vect{e}),\\
S_X(x)&=\sum_{j=1}^p\pi_j \prod_{i=1}^d\bfe_j^\mathsf{T} {E}_{\alpha,1}(\mat{T} x_i^\alpha) \vect{e},\\
f_X(x)&=\sum_{j=1}^p\pi_j \prod_{i=1}^dx_i^{\alpha-1}\bfe_j^\mathsf{T}{E}_{\alpha,\alpha}(\mat{T} x_i^\alpha) \vect{t}.
\end{align*}

The tail behaviour of each marginal is regularly varying with index $\alpha$. This follows from the fact that
$$X_i\stackrel{d}{=}\tilde{X}_iS^{(\alpha)}_i,\quad i=1,\dots,d,$$
where $\tilde{X}_i \sim \mbox{PH}(\bfp,\bfT_i)$, and $(S^{(\alpha)}_i)_{i=1,\dots,d}$ is an independent collection of i.i.d. positive stable variables, i.e. with Laplace transform $\exp(-u^\alpha)$. It is not hard to see that even the following relation holds:
\begin{proposition}
Let $X\sim \mbox{PH}_\alpha (\bfp,\mathcal{T})$, and $(S^{(\alpha)}_i)_{i=1,\dots,d}$ as above. Then
$$X=\tilde{X}\bullet S^{(\alpha)},$$
where $\tilde{X}=(\tilde{X}_1,\dots,\tilde{X}_d)\sim \mbox{mPH}(\bfp,\mathcal{T})$, $S^{(\alpha)}=(S^{(\alpha)}_1,\dots,S^{(\alpha)}_d)$, $\tilde{X} {\ci} S^{(\alpha)}$, and $\bullet$ denotes the Schur (component-wise) product of vectors.
\end{proposition}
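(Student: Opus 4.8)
The plan is to establish the distributional identity $X \stackrel{d}{=} \tilde{X}\bullet S^{(\alpha)}$ by working conditionally on the common initial state $J_0$, which reduces the multivariate problem to a product of univariate statements already available in \cite{albrecher2019matrix}. First I would recall from Theorem \ref{transprobfrac} and its proof that the univariate fractional phase-type construction admits the representation $X_i^{\rm frac} \stackrel{d}{=} \tilde{X}_i S^{(\alpha)}_i$, where $\tilde{X}_i \sim \mbox{PH}(\bfp,\bfT_i)$ and $S^{(\alpha)}_i$ is a standard positive $\alpha$-stable variable independent of $\tilde{X}_i$; this comes from the fact that a Mittag-Leffler$(\alpha,\lambda)$ holding time equals an Exp$(\lambda)$ holding time subordinated by an independent $\alpha$-stable clock, and the subordinator is the \emph{same} time-change applied to the whole path.

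The key observation is that, conditionally on $J_0 = j$, the $d$ semi-Markov processes $(J_t^{(i)})_{t\ge0}$ evolve independently (this is exactly the dependence structure \eqref{dependence_def}), and moreover each one is a time-changed version of the corresponding homogeneous Markov jump process. Crucially, the $\alpha$-stable subordinators used for the $d$ different processes are independent of each other and of everything else, so conditionally on $J_0=j$ we may write $(X_1,\dots,X_d) \stackrel{d}{=} (\tilde{X}_1 S^{(\alpha)}_1,\dots,\tilde{X}_d S^{(\alpha)}_d)$ where, given $J_0=j$, the $\tilde{X}_i$ are independent with $\tilde{X}_i \sim \mbox{PH}(\bfe_j^\mathsf{T},\bfT_i)$ and the $S^{(\alpha)}_i$ are i.i.d. standard positive stable, independent of the $\tilde{X}_i$. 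Unconditioning, the vector $(\tilde{X}_1,\dots,\tilde{X}_d)$ has precisely the mPH$(\bfp,\mathcal{T})$ law by the definition of that class, the collection $(S^{(\alpha)}_i)$ remains independent of $J_0$ and hence of $\tilde{X}$, and the Schur product is recovered.

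Concretely, the steps in order are: (i) state the univariate identity $X_i \stackrel{d}{=}\tilde{X}_iS^{(\alpha)}_i$ with the subordination interpretation, noting the time-change acts pathwise so it is a single random rescaling of the absorption time; (ii) condition on $J_0=j$ and invoke \eqref{dependence_def} to get conditional independence of the $d$ processes, hence of the $d$ pairs $(\tilde X_i, S^{(\alpha)}_i)$; (iii) observe the stable clocks are drawn independently across $i$ and independently of $J_0$; (iv) combine with the Laplace-transform or cdf formula for mPH$(\bfp,\mathcal{T})$ from the earlier section to identify $(\tilde X_1,\dots,\tilde X_d)\sim\mbox{mPH}(\bfp,\mathcal{T})$ after mixing over $j$; (v) conclude $X = \tilde X \bullet S^{(\alpha)}$ with $\tilde X \ci S^{(\alpha)}$. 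The main obstacle is step (i)--(ii): one must be careful that the single $\alpha$-stable subordinator attached to process $i$ is what governs \emph{all} its Mittag-Leffler holding times jointly (not an independent stable variable per jump), so that $X_i$ is genuinely $\tilde{X}_i$ multiplied by \emph{one} stable factor; this is exactly the content of the univariate construction in \cite{albrecher2019matrix}, so once that is cited cleanly the multivariate extension is just the conditional-independence bookkeeping.
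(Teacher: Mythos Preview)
Your approach is correct in spirit and more structural than the paper's, which simply reads ``Verification via Laplace transforms or cumulative distribution functions is immediate.'' The paper computes the joint survival (or Laplace transform) of both $X$ and $\tilde X\bullet S^{(\alpha)}$ using the explicit mPH and mPH$_\alpha$ formulas already derived, and observes they coincide term by term. Your route---condition on $J_0=j$, invoke conditional independence from \eqref{dependence_def}, apply the univariate identity $\mbox{PH}_\alpha(\bfe_j,\bfT_i)\stackrel{d}{=}\mbox{PH}(\bfe_j,\bfT_i)\cdot S^{(\alpha)}_i$ coordinate-wise, and then uncondition---is a valid and arguably more transparent way to see why the two laws agree; it makes explicit that the dependence lives entirely in $J_0$ and is untouched by the stable factors. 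In effect, your argument \emph{is} the computation the paper alludes to, just phrased at the level of random variables rather than formulas.

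One caution on your heuristic for step (i): the inverse--stable--subordinator picture does give a single clock per marginal, but the absorption time of the time-changed chain is $L^{(i)}_{\tilde X_i}$, and self-similarity yields $L^{(i)}_{\tilde X_i}\stackrel{d}{=}\tilde X_i^{1/\alpha}\,L^{(i)}_1$, not $\tilde X_i\cdot L^{(i)}_1$. So the sentence ``$X_i$ is genuinely $\tilde X_i$ multiplied by one stable factor'' does not follow from the subordination intuition as you describe it. Since you plan to cite \cite{albrecher2019matrix} for the univariate product representation rather than re-derive it, this does not break your proof of the proposition; just be careful not to lean on the pathwise time-change as the justification for the specific multiplicative form stated here.
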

\begin{proof}
Verification via Laplace transforms or cumulative distribution functions is immediate.
\end{proof}
By construction, the marginals are tail independent, but the copula behaviour in general is different (for $\alpha\neq 1$) than that of the mPH class. See also \cite{abb2020multivariate, multiml} for multivariate fractional PH constructions, with and without possible tail dependence.

\begin{remark}\rm
The adaptations to the estimation methodology are straightforward for mIPH distributions, and rather involved for $\mbox{mPH}_\alpha$ distributions, and thus we omit both cases. It is not a given that an EM algorithm will be faster than naive numerical optimization, especially in the latter case.
\end{remark}
\section{Concluding remarks}\label{sec:conclusion}
We have introduced a new class of multivariate phase-type distributions which is simple to work with probabilistically and has a natural physical interpretation. Common functionals and measures of dependence are explicitly available in terms of matrices, extending and unifying the corresponding formulas for simpler models. Their estimation methodology gains efficiency with respect to more general specifications, and their flexibility was shown theoretically and with some illustrations for a given state-space size.

Several directions of research are promising in conjunction with the mPH class. Exploring the interval-censored and inhomogeneous case will allow the consideration of data with non-exponential tail behaviour, and generally provide more parsimonious estimation. Introduction of covariate information into the initial vector or into the intensity function can further allow for a multivariate regression analysis of claim severity, which can be useful for the simultaneous risk assessment for various lines of business. Finally, a comparison between existing multivariate fractional PH distributions and the mPH$_\alpha$ class could clarify how to introduce tail dependence into mPH models.

\textbf{Acknowledgement.} MB would like to acknowledge financial support from the Swiss National Science Foundation Project 200021\_191984.

\textbf{Declaration} MB declares no conflict of interest related to the current manuscript.

\bibliographystyle{apalike}
\bibliography{dependence.bib}

\end{document}